%BeginFileInfo
%%Publisher=ARXIV
%%Project=VMSTA
%%Manuscript=VMSTA119
%%MS position=
%%Spelled=Dictionary: American, Computer: ECRC601, 2018.10.05 09:36
%EndFileInfo
%
% VMSTA, VTEX
\input ./style/arxiv-vmsta.cfg
\documentclass[numbers,compress,v1.0.1]{vmsta}
\usepackage{vtexbibtags}
\usepackage{authorquery}
\usepackage{mathrsfs}

\volume{5}% Updated by VTEXPTS2LaTeX.exe, 18.12.2018 09:31
\issue{4}% Updated by VTEXPTS2LaTeX.exe, 18.12.2018 09:31
\pubyear{2018}
\firstpage{483}% Updated by VTEXPTS2LaTeX.exe, 18.12.2018 09:31
\lastpage{499}% Updated by VTEXPTS2LaTeX.exe, 18.12.2018 09:31
\aid{VMSTA119}
\doi{10.15559/18-VMSTA119}% Updated by VTEXPTS2LaTeX.exe, 05.10.2018 09:31
\articletype{research-article}

%\SETGRID% Removed by VTEXPTS2LaTeX.exe, 12.10.2018

%\setlength\printhistorymargin{10pt}

% put your definitions there:
\startlocaldefs
\newcommand{\rrVert}{\Vert}
\newcommand{\llVert}{\Vert}

\urlstyle{rm}
\allowdisplaybreaks

\newtheorem{theorem}{Theorem}
\newtheorem{remark}{Remark}
\newtheorem{lemma}{Lemma}

\newtheorem{proposition}{Proposition}
\theoremstyle{definition}
\newtheorem{definition}{Definition}

\def\cl#1{{\mathscr{#1}}}
\def\P{{\mathbb{P}}}
\def\R{{\mathbb{R}}}
\def\N{{\mathbb{N}}}
\def\E{{\mathbb{E}}}
\def\Var{\mathrm{Var}}
\def\Cov{\mathrm{Cov}}
\def\ep{\varepsilon}
\endlocaldefs

\begin{aqf}
\querytext{Q1}{Note: The symbol \P prints out in an unfavorable way.}
\querytext{Q2}{Please check the re-edited sentence.}
\querytext{Q3}{Please check the re-edited sentence.}
\querytext{Q4}{Please check -ized/-ised in the original title.}
\querytext{Q5}{\label\string{eq:frac-eq-u\string}}
\end{aqf}
\begin{document}

\begin{frontmatter}
\pretitle{Research Article}

\title{Large deviations for conditionally Gaussian processes:
 estimates of level crossing probability}

\author[a]{\inits{B.}\fnms{Barbara}~\snm{Pacchiarotti}\thanksref{cor1}\ead[label=e1]{pacchiar@mat.uniroma2.it}}
\author[b]{\inits{A.}\fnms{Alessandro}~\snm{Pigliacelli}\thanksref{cor1}\ead[label=e2]{alex-matrix23@hotmail.it}}
\thankstext[type=corresp,id=cor1]{Corresponding authors.}
\address[a]{Dept. of Mathematics, \institution{University of Rome ``Tor Vergata''}}
\address[b]{\institution{BIP-Business Integration Partners}}

%\thankstext[id=f1]{}

%\dedicated{}

%\markboth{Authors}{Title}
\markboth{B. Pacchiarotti, A. Pigliacelli}{Large deviations for conditionally Gaussian processes:
 estimates of level crossing probability}

\begin{abstract}
The problem of (pathwise) large deviations for conditionally continuous Gaussian processes is investigated. The theory of large deviations for Gaussian processes is extended to the wider class of random processes -- the conditionally Gaussian processes. The estimates of level crossing probability for such processes are given as an application.
\end{abstract}
\begin{keywords}
\kwd{Conditionally Gaussian processes}
\kwd{large deviations}
\kwd{ruin problem}
\end{keywords}
\begin{keywords}[MSC2010]%
\kwd{60F10}
\kwd{60G15}
\kwd{60G07}
\end{keywords}

\received{\sday{16} \smonth{5} \syear{2018}}% Updated by VTEXPTS2LaTeX.exe, 05.10.2018 09:31
\revised{\sday{30} \smonth{7} \syear{2018}}% Updated by VTEXPTS2LaTeX.exe, 05.10.2018 09:31
\accepted{\sday{1} \smonth{10} \syear{2018}}% Updated by VTEXPTS2LaTeX.exe, 05.10.2018 09:31
\publishedonline{\sday{12} \smonth{10} \syear{2018}}

\end{frontmatter}

\section{Introduction}
In this paper we study some large deviations principles for
conditionally continuous Gaussian processes. Then we find estimates of
level crossing probability for such processes.
Large deviations theory is concerned with the study of
probabilities of very ``rare'' events.
There are events whose probability is very small, however these events
are of great importance; they may represent an \textit{atypical
situation} (i.e. a deviation from the average behavior) that may cause
disastrous consequences: an insurance company or a bank may bankrupt; a
statistical estimator may give a wrong information; a physical or chemical
system may show an atypical configuration.
The aim of this paper is to extend the theory of large deviations for
Gaussian processes to a wider class of random processes -- the
conditionally Gaussian processes. Such processes were introduced in
applications in finance, optimization and control problems. See, for
instance, \cite{DFMR,Loto,Gul} and \cite{AdSaGa}.
 More precisely, Doucet et al. in \cite{DFMR}
considered modelling the behavior
of latent variables in neural networks by Gaussian processes with
random parameters;
Lototsky in \cite{Loto} studied stochastic parabolic equations with
random coefficients; Gulisashvili in \cite{Gul} studied large
deviations principle for some particular stochastic volatility models
where the log-price is, conditionally, a Gaussian process; in \cite
{AdSaGa} probabilities of large extremes of conditionally Gaussian
processes were considered, in particular sub-Gaussian processes i.e.
Gaussian processes with a random variance.
Let $ (Y,Z) $ be a random element on the probability space $(\varOmega,
\cl{F}, \mathbb P)$,\querymark{Q1} where $ Z=(Z_t)_{t \in[0,1]} $ is a process taking
values in $\R$ and $Y$ is an arbitrary random element (a process or a
random variable). We say that $Z$ is a \textit{conditionally Gaussian
process} if the conditional distribution of the process $ Z|Y $ is
(almost surely) Gaussian.
The theory of large deviations for Gaussian processes and for
conditioned Gaussian processes is already well developed. See, for
instance, Section 3.4 in \cite{Deu-Str} (and the references therein)
for Gaussian processes, \cite{Car-Pac-Sal} and \cite{Gio-Pac} for
particular conditioned Gaussian processes. The extension of this theory
is possible thanks to the results obtained by Chaganty in \cite{Cha}.

We consider a family of processes $(Y^n,Z^n)_{n \in\N}$ on a
probability space $(\varOmega, \cl F, \P)$. $ (Y^n)_{n \in\N} $ is a
family of processes taking values in a measurable space $(E_1, \cl
E_1)$ that satisfies a large deviation principle (LDP for short) and
$(Z^n)_{n \in\N}$ is a family of processes taking values in $(E_2,
\cl E_2)$ such that for every $n \in\N$, $Z^n|Y^n$ is a Gaussian
process ($\P$-a.s.). We want to find a LDP for the family $(Z^n)_{n
\in\N}$.

A possible application of LDPs is computing the estimates of level
crossing probability (ruin problem). We will give the asymptotic
behavior (in terms of large deviations) of the probability
\begin{equation*}
p_n=\P \Bigl(\sup_{0\leq t \leq1}\bigl(Z^n_t-
\varphi(t)\bigr)>1 \Bigr),
\end{equation*}
where $\varphi$ is a suitable function.
We will consider the following families of conditionally Gaussian processes.

1) The class of Gaussian processes with random variance and random
mean, i.e.
the processes of the type $(Z_t)_{t \in[0,1]}=(Y_1 X_t+Y_2)_{t \in
[0,1]}$, where $X$ is a centered continuous Gaussian process with
covariance function $k$ and $Y=(Y_1,Y_2)$ is a random element
independent of $X$. Notice that $Z|Y$ is Gaussian with
\[
\Cov(Z_t,Z_s|Y)=\E\bigl[(X_tY_1)
(X_sY_1)|Y\bigr] =Y_1^2k(t,s),
\]
and
\[
\E[Z_t|Y] = \E[Y_1X_t+Y_2|Y]
=Y_2.
\]

2) The class of Ornstein--Uhlenbeck type processes with random diffusion
coefficients. More precisely $(Z_t)_{t \in[0,1]}$ is the solution of
the following stochastic differential equation:
\[
\begin{cases}
dZ_{t} = ( a_{0} + a_{1}Z_{t} )\ dt + Y dW_{t},\quad 0< t\leq1, \\
Z_{0} = x ,
\end{cases}
\]
where $x, a_{0}, a_{1} \in\mathbb{R} $ and $Y$ is a random element
independent of the Brownian motion $(W_t)_{t\in[0,1]}$. %\vadjust{\goodbreak}

The paper is organized as follows. In Section \ref{sect:ld} we recall
some basic facts on large deviations theory for continuous Gaussian processes.
In Section \ref{sect:chaganty} we introduce the conditionally Gaussian
processes and the Chaganty theory. In
Section \ref{sect:main} and \ref{sect:OU} we study the theoretical
problem and we give the main results.
Finally in Section \ref{sect:ruin} we investigate the ruin problem for
such processes.

\section{Large deviations for continuous Gaussian processes} \label{sect:ld}

We briefly recall some main facts on large deviations
principles and reproducing kernel Hilbert spaces for
Gaussian processes we are going to use. For a detailed development of
this very wide theory we can refer, for example, to the following
classical references:
Chapitre II in Azencott \cite{Aze},
Section 3.4 in Deuschel and Strook \cite{Deu-Str}, Chapter 4 (in
particular Sections 4.1 and 4.5) in Dembo and Zeitouni
\cite{Dem-Zei}, for large deviations principles; Chapter 4
(in particular Section 4.3) in \cite
{Hid-Hit}, Chapter 2 (in particular Sections 2.2 and 2.3) in \cite{Ber-Tho}, for
reproducing kernel Hilbert space. Without loss
of generality, we can consider \textit{centered} Gaussian processes.

\subsection{Reproducing kernel Hilbert space}\label
{subsec:rkhs-continuous-gaussian-process}
An important tool to handle continuous Gaussian processes is the
associated reproducing kernel Hilbert space (RKHS).

%which is a Hilbert space of continuous, real valued functions. Since
%the covariance function fully identifies, up to the mean, a Gaussian
%process, we can talk equivalently of RKHS associated with the process
%of with its %covariance function.
Let $U=(U_t)_{t \in[0,1]}$ be a continuous, centered , Gaussian
process on a probability space $(\varOmega,\cl F, \P)$, with covariance
function $k$. From now on, we will denote by $\cl C([0,1])$ the set
of continuous functions on $[0,1]$ endowed with the topology induced by
the sup-norm $(\cl C([0,1]),||\cdot||_{\infty})$. Moreover, we will
denote by $\cl{M}[0,1]$ its dual, i.e. the set of signed Borel
measures on $[0,1]$. The action of $\cl{M}[0,1]$ on $\cl C([0,1])$ is
given by
\[
\langle\lambda, h \rangle= \int_0^1 h(t) \, d
\lambda(t), \quad \lambda\in\cl M[0,1], \ h \in\cl C\bigl([0,1]\bigr).
\]
Consider the set
\[
\cl{L} = \Biggl\{ x \in \cl C\bigl([0,1]\bigr) \; \Big| \; x(t) = \int
_0^1 k(t,s) \, d\lambda(s), \, \lambda\in\cl
M[0,1] \Biggr\}.
\]
The RKHS relative to the kernel $k$ can be constructed as the
completion of the set $\cl{L}$ with respect to a suitable norm.
Consider the set of (real) Gaussian random variables
\[
\varGamma= \Biggl\{ Y \; | \; Y=\langle\lambda, U \rangle= \int
_0^1 U_t \, d\lambda(t) , \,
\lambda\in\cl M[0,1] \Biggr\} \subset L^2(\varOmega, \cl{F},\P).
\]
We have that, for $Y_1, Y_2 \in\varGamma$, say $Y_i= \langle\lambda_i
, U \rangle, \, i=1,2$,
\begin{align}
\label{eqn:inner-product-H} \langle Y_1 , Y_2 \rangle_{L^2(\varOmega, \cl{F},\P)}
&= \Cov \Biggl( \int_0^1 U_t \, d
\lambda_1(t),\int_0^1
U_t \, d\lambda_2(t) \Biggr)
\nonumber
\\*
&= \int_0^1 \int_0^1
k(t,s) \, d\lambda_1(t) d\lambda_2(s).
\end{align}
Define now
\[
H = \overline{\varGamma}^{\|.\|_{L^2(\varOmega,\cl{F},\P)}}.
\]
Since $L^2$-limits of Gaussian random variables are still Gaussian, we
have that $H$ is a closed subspace of $L^2(\varOmega,\cl{F},\P)$
consisting of real Gaussian random variables. Moreover, it becomes a
Hilbert space when endowed with the inner product
\[
\langle Y_1, Y_2 \rangle_H = \langle
Y_1, Y_2 \rangle_{L^2(\varOmega
,\cl{F},\P)}, \quad
Y_1,Y_2 \in H.
\]
\begin{remark}\rm
We remark that, since any signed Borel measure $\lambda$ can be weakly
approximated by a linear combination of Dirac deltas, the Hilbert
space $H$ above is nothing but the Hilbert space generated by the
Gaussian process $U$, namely
\begin{align*}
H &= \overline{\mathit{sp}\bigl\{U_t,\, t \in[0,1]\bigr\}}^{\|.\|_{L^2(\varOmega,\cl
{F},\P)}}\\
&= \overline{ \Biggl\{ \sum_{j=1}^n
a_j\,U_{t_j} \; | \; n \in \mathbb{N}, a_j \in
\mathbb{R}, t_j \in[0,1] \Biggr\} }^{\|.\|
_{L^2(\varOmega,\cl{F},\P)}}.
\end{align*}
\end{remark}

Consider now the following mapping
\begin{align}
\label{eqn:loeve-isometry} \cl{S} : H &\longrightarrow\cl C\bigl([0,1]\bigr), \quad\quad Y
\mapsto{(\cl{S}Y)}_. = \E(U_. Y).
\end{align}

\begin{definition} \label{definition:rkhs2}
Let $U=(U_t)_{t \in[0,1]}$ be a continuous Gaussian process. We define
the reproducing kernel Hilbert space relative to the Gaussian process
$U$ as
\[
\cl{H} = \cl{S}(H) = \bigl\{ h \in\cl C\bigl([0,1]\bigr) \; | \; h(t) = {(\cl
{S}Y)}_t, \, Y \in H \bigr\},
\]
with an inner product defined as
\[
\langle h_1 , h_2 \rangle_{\cl{H}} = \bigl
\langle\cl{S}^{-1} h_1, \cl {S}^{-1}
h_2 \bigr\rangle_H = \bigl\langle\cl{S}^{-1}
h_1, \cl{S}^{-1} h_2 \bigr\rangle_{L^2(\varOmega,\cl{F},\P)},
\quad h_1,h_2 \in\cl{H}.
\]
\end{definition}
Then, we have
\begin{lemma} \label{lemma:loeve} (Theorem 35 in \cite{Ber-Tho}). Let
$H$ be the Hilbert space of the continuous Gaussian process $U$ defined
above. Then $H$ is isometrically isomorphic to the Reproducing Kernel
Hilbert Space $\cl{H}$ of $U$, and the corresponding isometry is given
by (\ref{eqn:loeve-isometry}).
\end{lemma}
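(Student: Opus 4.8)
The plan is to show that the map $\cl S$ defined in (\ref{eqn:loeve-isometry}) is a linear bijection from $H$ onto $\cl H = \cl S(H)$ which transports the inner product of $H$, and then to check that the resulting Hilbert space $\cl H$ genuinely is the reproducing kernel Hilbert space of the kernel $k$. First I would verify that $\cl S$ is well defined, i.e. that for each $Y \in H$ the function $t \mapsto \E(U_t Y)$ is continuous. This follows from the $L^2$-continuity of the Gaussian process $U$ (a consequence of its sample continuity) together with the Cauchy--Schwarz inequality, $|\E(U_t Y) - \E(U_s Y)| \le \|U_t - U_s\|_{L^2(\varOmega,\cl F,\P)}\,\|Y\|_{L^2(\varOmega,\cl F,\P)}$, which tends to $0$ as $s \to t$. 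Linearity of $\cl S$ is immediate from linearity of the expectation.

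The key step is injectivity of $\cl S$, which is what makes $\cl S^{-1}$---and hence the inner product in Definition \ref{definition:rkhs2}---well defined. Suppose $\cl S Y = 0$, so that $\E(U_t Y) = 0$ for every $t \in [0,1]$. Integrating against an arbitrary $\lambda \in \cl M[0,1]$ and using Fubini's theorem gives $\E(\langle \lambda, U\rangle\, Y) = \int_0^1 \E(U_t Y)\, d\lambda(t) = 0$, so $Y$ is orthogonal in $L^2(\varOmega,\cl F,\P)$ to every element of $\varGamma$. Passing to the closure yields $Y \perp H$; since $Y \in H$, this forces $\|Y\|_{L^2(\varOmega,\cl F,\P)}^2 = \langle Y, Y\rangle = 0$, i.e. $Y = 0$. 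Thus $\cl S$ is a linear bijection of $H$ onto $\cl H$, and setting $\langle \cl S Y_1, \cl S Y_2\rangle_{\cl H} = \langle Y_1, Y_2\rangle_H$ makes $\cl S$ an isometric isomorphism by construction; completeness of $\cl H$ is then inherited from that of $H$.

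It remains to confirm that $(\cl H, \langle\cdot,\cdot\rangle_{\cl H})$ is the RKHS of $k$. Since $(\cl S U_t)_s = \E(U_s U_t) = k(s,t)$, each section $k(t,\cdot) = \cl S U_t$ belongs to $\cl H$. The reproducing property then follows directly from the isometry: for $h = \cl S Y \in \cl H$,
\[
\langle h, k(t,\cdot)\rangle_{\cl H} = \bigl\langle \cl S^{-1} h, \cl S^{-1} k(t,\cdot)\bigr\rangle_H = \langle Y, U_t\rangle_H = \E(Y U_t) = (\cl S Y)_t = h(t).
\]
This is exactly the reproducing identity, so $\cl H$ is the reproducing kernel Hilbert space associated with $k$ and $\cl S$ is the claimed isometry.

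Finally, I would observe that this construction is consistent with the set $\cl L$ introduced earlier: evaluating $\cl S$ on a generator $\langle \lambda, U\rangle \in \varGamma$ gives $(\cl S\langle \lambda, U\rangle)_t = \int_0^1 k(t,s)\, d\lambda(s)$, so $\cl S(\varGamma) = \cl L$ and hence $\cl H = \cl S(\overline{\varGamma})$ is the completion of $\cl L$ in the norm $\|\cl S^{-1}\,\cdot\,\|_H$, matching the description of the RKHS as the completion of $\cl L$. The main obstacle is the injectivity step: one must carefully justify the interchange of expectation and integration against the signed measure $\lambda$ and the transfer of orthogonality from $\varGamma$ to its closure $H$, since everything downstream---the definition of $\cl S^{-1}$, the inner product on $\cl H$, and the reproducing identity---depends on it.
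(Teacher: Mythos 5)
The paper does not actually prove this lemma: it is quoted verbatim as Theorem~35 of \cite{Ber-Tho}, so there is no internal argument to compare yours against. That said, your proposal is a correct, self-contained rendition of the standard proof of the Lo\`eve isometry, and it correctly isolates the one genuinely nontrivial point, namely injectivity of $\cl{S}$, without which the transported inner product in Definition~\ref{definition:rkhs2} would not be well defined. Two small remarks. First, your injectivity argument can be shortened: $\E(U_tY)=\langle U_t,Y\rangle_{L^2}$ and $U_t=\langle\delta_t,U\rangle\in\varGamma$, so $\cl{S}Y=0$ already says $Y$ is orthogonal to the generating set $\{U_t\}$, whose closed span is $H$; the detour through general $\lambda\in\cl{M}[0,1]$ and Fubini is harmless but unnecessary (if you keep it, the integrability check $\int_0^1\|U_t\|_{L^2}\,d|\lambda|(t)<\infty$ follows from continuity of $t\mapsto k(t,t)$). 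Second, the step ``sample continuity implies $L^2$-continuity'' deserves its one-line Gaussian justification (convergence in probability of Gaussian variables forces convergence of means and variances, hence $L^2$-convergence), or more directly one can invoke continuity of the covariance $k$ and write $\|U_t-U_s\|_{L^2}^2=k(t,t)+k(s,s)-2k(t,s)$. With those touches the argument is complete, including the verification of the reproducing identity $\langle h,k(t,\cdot)\rangle_{\cl H}=h(t)$ that pins down $\cl H$ as the RKHS of $k$.
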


The map $\cl{S}$ defined in (\ref{eqn:loeve-isometry}) is referred to
as \textit{Lo\`{e}ve isometry}. Since the covariance function fully
identifies, up to the mean, a Gaussian process, we can talk
equivalently of RKHS associated with the process or with its covariance
function.

\subsection{Large deviations}

\begin{definition} \label{definition:ldp}(LDP)
Let $E$ be a topological space, $\cl{B}(E)$ the Borel $\sigma
$-algebra and $(\mu_n)_{n\in\N}$ a family of probability measures on
$\cl{B}(E)$; let $\gamma\, : \N\rightarrow\mathbb{R}^+ $ be a
function, such that $\gamma(n) \rightarrow+\infty$ as $n\to+\infty
$. We say that the family of probability measures $(\mu_n)_{n\in\N}$
satisfies a large deviation principle (LDP) on $E$ with the rate
function $I$ and the speed $\gamma(n)$ if, for any open set
$\varTheta$,
\[
-\inf_{x \in{\varTheta} } I(x) \le\liminf_{n\to+\infty}
\frac{1}{\gamma(n)} \log\mu_n (\varTheta)
\]
and for any closed set $\varGamma$
\begin{equation}
\label{eq:upperbound} \limsup_{n\to+\infty}\frac{1}{
\gamma(n)} \log
\mu_n (\varGamma) \le-\inf_{x \in{\varGamma}} I(x).
\end{equation}
\end{definition}
A rate function is a lower semicontinuous mapping $I:E\rightarrow
[0,+\infty]$. A rate function $I$ is said \textit{good} if the sets
$\{I\le a\}$ are compact for every $a \ge0$.
\begin{definition} \label{definition:wldp}(WLDP)
Let $E$ be a topological space, $\cl{B}(E)$ the Borel $\sigma
$-algebra and $(\mu_n)_{n\in\N}$ a family of probability measures on
$\cl{B}(E)$; let $\gamma\, : \N\rightarrow\mathbb{R}^+ $ be a
function, such that $\gamma(n) \rightarrow+\infty$ as $n\to+\infty
$. We say that the family of probability measures $(\mu_n)_{n\in\N}$
satisfies a weak large deviation principle (WLDP) on $E$ with the rate
function $I$ and the speed $\gamma(n)$ if the upper bound (\ref
{eq:upperbound}) holds for compact sets.
\end{definition}

\begin{remark}\rm
We say that a family of continuous processes $((X_t^n)_{t \in
[0,1]})_{n\in\N}$ satisfies a LDP if the associated family of laws
satisfy a LDP on $\cl C([0,1])$.
\end{remark}

The following remarkable theorem (Proposition 1.5 in \cite{Aze}) gives
an explicit expression of the Cram\'er transform $\varLambda^*$ of a
continuous centered Gaussian process $(U_t)_{t \in[0,1]}$ with
covariance function $k$. Let us recall that
\[
\varLambda(\lambda)=\log\E\bigl[\exp\bigl(\langle U, \lambda\rangle\bigr)\bigr]=
\frac
{1}{2} \int_0^1 \int
_0^1 k(t,s) \, d\lambda(t) d\lambda(s),
\]
for $\lambda\in\cl M[0,1]$.

\begin{theorem} \label{theorem:cramer-transform}
Let $(U_t)_{t \in[0,1]}$ be a continuous, centered Gaussian process
with covariance function $k$. Let $\varLambda^*$ denote the Cram\'{e}r
transform of $\varLambda$, that is,
\begin{align*}
\varLambda^*(x)& = \sup_{\lambda\in\cl M[0,1]} \bigl( \langle \lambda, x
\rangle- \varLambda(\lambda) \bigr)
\\
&= \sup_{\lambda\in\cl M[0,1]} \left( \langle\lambda, x \rangle-
\frac{1}{2} \int_0^1 \int
_0^1 k(t,s) \, d\lambda(t) d\lambda(s) \right).
\end{align*}
Then,
\begin{equation}
\label{eqn:cramer-transform-gaussian} \varLambda^*(x) = %
\begin{cases} \frac{1}{2} \|x \|_{\cl{H}}^2, & x \in\cl{H}, \\
+\infty & \text{otherwise},
\end{cases} %
\end{equation}
where $\cl{H}$ and $\| . \|_{\cl{H}}$ denote, respectively, the
reproducing kernel Hilbert space and the related norm associated to
the covariance function $k$.
\end{theorem}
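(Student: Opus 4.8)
The plan is to split into the two cases and exploit the Lo\`eve isometry of Lemma~\ref{lemma:loeve}. Throughout, for $\lambda \in \cl M[0,1]$ write $Y_\lambda = \langle \lambda, U \rangle \in \varGamma \subset H$, so that by (\ref{eqn:inner-product-H}) the quadratic term equals $\int_0^1\int_0^1 k(t,s)\,d\lambda(t)d\lambda(s) = \|Y_\lambda\|_H^2$, and hence $\varLambda(\lambda) = \frac{1}{2}\|Y_\lambda\|_H^2$.

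First I would treat the case $x \in \cl H$. Writing $x = \cl S Z$ for the unique $Z \in H$ (so that $x(t) = \E(U_t Z)$), a Fubini argument---justified because $U$ is continuous and $\lambda$ is a finite signed measure---gives
\begin{align*}
\langle \lambda, x \rangle = \int_0^1 \E(U_t Z)\,d\lambda(t) = \E\Bigl( Z \int_0^1 U_t\,d\lambda(t) \Bigr) = \langle Y_\lambda, Z \rangle_H .
\end{align*}
Therefore $\langle \lambda, x \rangle - \varLambda(\lambda) = \langle Y_\lambda, Z\rangle_H - \frac{1}{2}\|Y_\lambda\|_H^2$. As $\lambda$ ranges over $\cl M[0,1]$, the vector $Y_\lambda$ ranges over the dense subspace $\varGamma \subset H$; since the functional $Y \mapsto \langle Y, Z\rangle_H - \frac{1}{2}\|Y\|_H^2$ is continuous on $H$ and attains its maximum at $Y = Z$ with value $\frac{1}{2}\|Z\|_H^2$, density lets me pass from the supremum over $\varGamma$ to the supremum over $H$. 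The isometry then yields $\varLambda^*(x) = \frac{1}{2}\|Z\|_H^2 = \frac{1}{2}\|x\|_{\cl H}^2$.

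The harder direction is the ``otherwise'' case, which I would prove by contraposition: assuming $\varLambda^*(x) = M < \infty$, I show $x \in \cl H$. The key device is homogeneity. Replacing $\lambda$ by $t\lambda$ with $t>0$ in the inequality $\langle \lambda, x\rangle - \frac{1}{2}\|Y_\lambda\|_H^2 \le M$ gives $\langle \lambda, x\rangle \le M/t + \frac{t}{2}\|Y_\lambda\|_H^2$ for every $t>0$; optimizing in $t$, and using $-\lambda$ to handle signs, produces the bound $|\langle\lambda, x\rangle| \le \sqrt{2M}\,\|Y_\lambda\|_H$. This says precisely that the linear map $Y_\lambda \mapsto \langle\lambda,x\rangle$ is bounded on $\varGamma$ for the $H$-norm, so it extends to a continuous linear functional on $H = \overline{\varGamma}$. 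By the Riesz representation theorem there is $Z \in H$ with $\langle \lambda, x\rangle = \langle Y_\lambda, Z\rangle_H$ for all $\lambda \in \cl M[0,1]$.

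Finally I would identify $x$ with $\cl S Z$. Reversing the Fubini computation above, $\langle Y_\lambda, Z\rangle_H = \langle \lambda, \cl S Z\rangle$, so $\langle \lambda, x\rangle = \langle\lambda, \cl S Z\rangle$ for every $\lambda$; testing against Dirac masses $\lambda = \delta_t$ forces $x(t) = (\cl S Z)(t)$ for all $t \in [0,1]$, whence $x = \cl S Z \in \cl H$. The main obstacle is exactly this converse direction---securing the $L^2$-boundedness via the scaling trick and then matching the Riesz representative back through the isometry $\cl S$. The finiteness of $\varLambda^*(x)$ is what powers the boundedness estimate, and once that estimate is in hand the membership $x \in \cl H$ follows cleanly.
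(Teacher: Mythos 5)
The paper does not prove this statement: it is quoted verbatim as Proposition 1.5 of Azencott \cite{Aze}, so there is no in-paper proof to compare against. Your argument is correct and is essentially the classical one from that reference. Both halves are sound: for $x=\cl S Z\in\cl H$ the identity $\langle\lambda,x\rangle=\langle Y_\lambda,Z\rangle_H$ reduces the supremum to that of $Y\mapsto\langle Y,Z\rangle_H-\tfrac12\|Y\|_H^2$ over the dense subspace $\varGamma$, which by continuity equals the supremum over $H$, namely $\tfrac12\|Z\|_H^2=\tfrac12\|x\|_{\cl H}^2$; and for the converse the scaling $\lambda\mapsto t\lambda$ correctly yields $|\langle\lambda,x\rangle|\le\sqrt{2M}\,\|Y_\lambda\|_H$ (note this bound also disposes of the degenerate case $\|Y_\lambda\|_H=0$ and shows the functional is well defined on $\varGamma$), after which Riesz representation and testing against Dirac masses identify $x$ with $\cl S Z$. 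The only places you wave your hands are the Fubini interchange (which needs $\sup_t\E[U_t^2]<\infty$, guaranteed for a continuous Gaussian process by Fernique's theorem) and the implicit check that distinct measures giving the same $Y_\lambda$ assign the same value $\langle\lambda,x\rangle$; both are covered by estimates you already have, so these are presentational rather than substantive gaps.
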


In order to state a large deviation principle for a family of Gaussian
processes, we need the following definition.
\begin{definition}
A family of continuous processes ${((X^n_t)_{t \in[0,1]}})_{n\in\N}$
is exponentially tight at the speed $\gamma(n)$ if, for every $R>0$
there exists a compact set $K_R$ such that
\begin{equation}
\label{definition:exponential-tightness} \limsup_{n\to+\infty} \frac{1}{\gamma(n)} \log\P
\bigl(X^n \notin K_R\bigr) \le-R.
\end{equation}
\end{definition}

If the means and the covariance functions of an exponentially tight
family of Gaussian processes have a good limit behavior, then the
family satisfies a large deviation principle, as stated in the
following theorem
which is a consequence of the classic abstract G\"{a}rtner--Ellis Theorem (Baldi Theorem 4.5.20 and Corollary 4.6.14 in
\cite{Dem-Zei}) and Theorem \ref{theorem:cramer-transform}.
\begin{theorem}\label{theorem:ldp-gaussian}
Let $((X_t^n)_{t \in[0,1]})_{n\in\N}$ be an exponentially tight
family of continuous Gaussian processes with respect to the speed
function $\gamma(n)$. Suppose that, for any $\lambda\in\cl M[0,1]$,
\begin{equation}
\label{eq:meanlimit} \lim_{n\to+\infty} \E \bigl[ \bigl\langle\lambda,
X^n \bigr\rangle \bigr] = 0
\end{equation}
and the limit
\begin{equation}
\label{eq:covlimit} \varLambda(\lambda) = \lim_{n\to+\infty} \gamma(n) \Var
\bigl( \bigl\langle\lambda, X^n \bigr\rangle \bigr) = \int
_0^1 \int_0^1
\bar {k}(t,s) \, d\lambda(t) d\lambda(s)
\end{equation}
exists for some continuous, symmetric, positive definite function
$\bar{k}$, which is the covariance function of a continuous Gaussian process.
Then $((X_t^n)_{t \in[0,1]})_{n\in\N}$
satisfies a large deviation principle on $\cl C([0,1]))$, with the
speed $\gamma(n)$ and the good rate function
\begin{equation}
I(h) = %
\begin{cases} \frac{1}{2}  \llVert  h  \rrVert _{\bar{\cl H }}, & h
\in\bar{\cl H },\\ +\infty& \text{otherwise},
\end{cases} %
\end{equation}
where ${\bar{\cl H}}$ and $ \llVert  .  \rrVert _{\bar{\cl H }}$
respectively denote the reproducing kernel Hilbert space and the
related norm associated to the covariance function $\bar{k}$.
\end{theorem}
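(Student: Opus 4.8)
The plan is to deduce the statement from the abstract G\"artner--Ellis (Baldi) theorem applied to the family of laws of $X^n$ on $\cl C([0,1])$, whose topological dual is $\cl M[0,1]$. The first step is to compute the limiting normalized logarithmic moment generating functional. For each fixed $\lambda\in\cl M[0,1]$ the real random variable $\langle\lambda,X^n\rangle$ is Gaussian, so
\[
\frac{1}{\gamma(n)}\log\E\bigl[\exp\bigl(\gamma(n)\langle\lambda,X^n\rangle\bigr)\bigr] = \E\bigl[\langle\lambda,X^n\rangle\bigr] + \frac{\gamma(n)}{2}\Var\bigl(\langle\lambda,X^n\rangle\bigr).
\]
Using hypotheses (\ref{eq:meanlimit}) and (\ref{eq:covlimit}), the first term tends to $0$ and the second to $\frac12\int_0^1\int_0^1\bar k(t,s)\,d\lambda(t)\,d\lambda(s)$, which is exactly the logarithmic moment generating functional $\varLambda$ entering Theorem \ref{theorem:cramer-transform} for the covariance $\bar k$. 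Here the centering hypothesis (\ref{eq:meanlimit}) is precisely what removes any linear term and yields a \emph{centered} limit.

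Next I would invoke Baldi's theorem. Exponential tightness at speed $\gamma(n)$ is assumed, and the computation above shows that the limiting functional exists and is finite for every $\lambda\in\cl M[0,1]$. Consequently the family satisfies the large deviation upper bound for all closed sets, at speed $\gamma(n)$, with the convex good rate function given by the Cram\'er transform $\varLambda^*$. By Theorem \ref{theorem:cramer-transform} applied to the covariance $\bar k$, this transform is precisely $\varLambda^*(h)=\frac12\|h\|_{\bar{\cl H}}^2$ for $h\in\bar{\cl H}$ and $+\infty$ otherwise, so the candidate rate function $I$ coincides with $\varLambda^*$ and the upper bound is established.

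For the lower bound I would use the exposed-points part of Baldi's theorem (Corollary 4.6.14 in \cite{Dem-Zei}): it suffices to approximate every point of the effective domain, compatibly with $\varLambda^*$, by exposed points whose exposing hyperplane lies in $\cl M[0,1]$. I claim that every $h\in\cl L$, i.e. $h(t)=\int_0^1\bar k(t,s)\,d\lambda(s)$ with $\lambda\in\cl M[0,1]$, is such an exposed point, with exposing hyperplane $\lambda$: by the reproducing property $\langle\lambda,g\rangle=\langle h,g\rangle_{\bar{\cl H}}$ for all $g\in\bar{\cl H}$, whence $\frac12\|g\|_{\bar{\cl H}}^2-\frac12\|h\|_{\bar{\cl H}}^2-\langle\lambda,g-h\rangle=\frac12\|g-h\|_{\bar{\cl H}}^2>0$ for $g\neq h$. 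Since $\bar{\cl H}$ is by construction the completion of $\cl L$ and embeds continuously into $\cl C([0,1])$, while $\varLambda^*$ is continuous in the $\bar{\cl H}$-norm, any $h\in\bar{\cl H}$ with $\varLambda^*(h)<\infty$ lying in an open set $G$ is approximated in $\bar{\cl H}$-norm (hence in sup-norm) by exposed points $h_k\in\cl L\cap G$ with $\varLambda^*(h_k)\to\varLambda^*(h)$, giving the lower bound with $I=\varLambda^*$.

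The main obstacle is precisely this last step: Baldi's theorem delivers the lower bound only in terms of exposed points whose exposing hyperplane belongs to the dual $\cl M[0,1]$, yet a generic $h\in\bar{\cl H}$ need not admit a representing measure. The work therefore lies in verifying that the measure-representable functions $\cl L$ are genuine exposed points and are dense in $\bar{\cl H}$ for the (strong) RKHS topology, so that the density argument transfers the bound from $\cl L$ to the whole effective domain; goodness of $I$ then follows from exponential tightness together with the upper bound.
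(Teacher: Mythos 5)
Your proposal is correct and follows exactly the route the paper itself indicates: the paper gives no written proof but derives the theorem as ``a consequence of the classic abstract G\"artner--Ellis Theorem (Baldi, Theorem 4.5.20 and Corollary 4.6.14 in \cite{Dem-Zei}) and Theorem \ref{theorem:cramer-transform}'', which is precisely your computation of the limiting functional $\varLambda$, the upper bound from exponential tightness, and the lower bound via the exposed points $h=\int_0^1\bar k(\cdot,s)\,d\lambda(s)$ and their density in $\bar{\cl H}$. Your write-up simply supplies the details that the paper delegates to the cited references, and they are the standard ones.
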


A useful result which can help in investigating the exponential
tightness of a family of continuous centered Gaussian processes is the
following proposition (Proposition 2.1 in \cite{Mac-Pac}); the
required property follows from H\"older continuity of the mean and the
covariance function.
\begin{proposition} \label{proposition:cond-exp-tight}
Let $((X_t^n)_{t \in[0,1]})_{n\in\N}$ be a family of continuous
Gaussian processes with $X_0^n = 0$ for all $n\in\N$. Denote
$m^n(t) = \E[X_t^n]$ and  $k^n(t,s) = \Cov(X_t^n, X_s^n)$.
Suppose there exist constants $\alpha, M_1, M_2 > 0$ such that for
$n\in\N$
\[
\sup_{s,t \in[0,1], \, s \ne t} \frac{|m^n(t) -
m^n(s)|}{|t-s|^\alpha} \le M_1
\]
and
\begin{equation}
\label{eqn:cov-tight} \sup_{s,t \in[0,1], \, s \ne t} \gamma(n)\frac
{|k^n(t,t)+k^n(s,s)-2k^n(s,t)|}{ |t-s|^{2\alpha}} \le
M_2.
\end{equation}
Then the family $((X_t^n)_{t \in[0,1]})_{n\in\N}$ is exponentially
tight with respect to the speed function $\gamma(n)$.
\end{proposition}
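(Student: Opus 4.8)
The plan is to reduce exponential tightness to a uniform (in $n$) control of the modulus of continuity of the paths, obtained from the Garsia--Rodemich--Rumsey (GRR) inequality combined with Gaussian moment estimates; the factor $\gamma(n)$ appearing in (\ref{eqn:cov-tight}) is what produces the correct exponential rate.

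First I would split $X^n=m^n+\widetilde X^n$, where $m^n(t)=\E[X^n_t]$ is deterministic and $\widetilde X^n=X^n-m^n$ is a centered Gaussian process with $\widetilde X^n_0=0$. The H\"older bound on the means together with $m^n(0)=0$ makes $\{m^n\}_{n\in\N}$ uniformly bounded and equicontinuous, hence relatively compact in $\cl C([0,1])$ by the Arzel\`a--Ascoli theorem; let $K^{(0)}$ be the closure of its range. It then suffices to prove exponential tightness of the centered family $(\widetilde X^n)_{n\in\N}$: if $K_R$ is compact and carries the required bound for $\widetilde X^n$, then the Minkowski sum $K^{(0)}+K_R$ is compact and carries it for $X^n$, since $X^n\notin K^{(0)}+K_R$ forces $\widetilde X^n=X^n-m^n\notin K_R$.

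For the centered part I fix $\beta\in(0,\alpha)$ and apply the GRR inequality pathwise with $p(u)=u^\beta$ and $\Psi_n(x)=\exp(\lambda\gamma(n)x^2)-1$, where $\lambda=1/(4M_2)$. Setting
\[
B_n=\int_0^1\!\!\int_0^1\Psi_n\!\left(\frac{|\widetilde X^n_t-\widetilde X^n_s|}{|t-s|^\beta}\right)dt\,ds,
\]
the GRR inequality gives a pathwise bound of the form $\sup_{s\ne t}|\widetilde X^n_t-\widetilde X^n_s|/\rho(|t-s|)\le C\gamma(n)^{-1/2}\big(1+\sqrt{\log(1+4B_n)}\big)$, with modulus $\rho(h)=h^\beta\sqrt{1\vee\log(1/h)}$. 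The crucial point is that $\E[B_n]$ remains bounded uniformly in $n$: for the centered Gaussian increment $G=\widetilde X^n_t-\widetilde X^n_s$ one has $\Var(G)=k^n(t,t)+k^n(s,s)-2k^n(s,t)\le(M_2/\gamma(n))|t-s|^{2\alpha}$ by (\ref{eqn:cov-tight}), so that
\[
\E\!\left[\exp\!\Big(\tfrac{\lambda\gamma(n)}{|t-s|^{2\beta}}G^2\Big)\right]\le\big(1-2\lambda M_2|t-s|^{2(\alpha-\beta)}\big)^{-1/2}\le\sqrt2,
\]
using $\beta<\alpha$ and $|t-s|\le1$ to ensure $2\lambda M_2|t-s|^{2(\alpha-\beta)}\le\tfrac12$; integrating over $[0,1]^2$ yields $\E[B_n]\le\sqrt2-1$.

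Finally I would conclude by Markov's inequality. Given $R>0$, set $K_R=\{x\in\cl C([0,1]):x(0)=0,\ \sup_{s\ne t}|x(t)-x(s)|/\rho(|t-s|)\le r_R\}$, which is compact by Arzel\`a--Ascoli since $\rho(h)\to0$ as $h\to0$. The pathwise GRR bound shows $\{\widetilde X^n\notin K_R\}\subset\{B_n\ge a_n\}$ with $a_n$ of order $\exp(r_R^2\gamma(n)/C^2)$, so that $\P(\widetilde X^n\notin K_R)\le\E[B_n]/a_n\le(\sqrt2-1)a_n^{-1}$ and hence $\limsup_n\gamma(n)^{-1}\log\P(\widetilde X^n\notin K_R)\le-r_R^2/C^2$; taking $r_R=C\sqrt R$ gives $\le-R$, and the Minkowski-sum argument transfers the estimate to $X^n$. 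I expect the main obstacle to be the GRR step: choosing $\Psi_n$ so that the exponential rate scales as $\gamma(n)$ while $\E[B_n]$ stays bounded, and extracting from the GRR double integral the clean modulus $\rho$ carrying the $\gamma(n)^{-1/2}$ prefactor.
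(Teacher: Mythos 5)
Your argument is correct, but it cannot be ``the same as the paper's'' for the simple reason that the paper offers no proof of this statement: Proposition \ref{proposition:cond-exp-tight} is imported verbatim from Proposition 2.1 of \cite{Mac-Pac}, and the authors only remark that the required property follows from the H\"older control of the mean and of the covariance increments. What you have produced is therefore a self-contained substitute for the citation, and it holds up: the reduction to the centered family via the Minkowski sum $K^{(0)}+K_R$ is sound (the means are uniformly bounded and equicontinuous because $m^n(0)=0$); the Garsia--Rodemich--Rumsey step with $\Psi_n(x)=\exp(\lambda\gamma(n)x^2)-1$, $\lambda=1/(4M_2)$, correctly exploits the fact that $\Var(X^n_t-X^n_s)=k^n(t,t)+k^n(s,s)-2k^n(s,t)\le M_2\gamma(n)^{-1}|t-s|^{2\alpha}$, so that $2\lambda\gamma(n)|t-s|^{-2\beta}\Var(X^n_t-X^n_s)\le 1/2$ for $\beta<\alpha$ and the Gaussian exponential moment gives $\E[B_n]\le\sqrt2-1$ uniformly in $n$; and the Markov/Arzel\`a--Ascoli conclusion delivers exactly the rate $-R$ after choosing $r_R=C\sqrt{R}$. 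The route taken in \cite{Mac-Pac} (and in the earlier literature this kind of statement descends from) controls the H\"older norm through a different device --- expansion in the Schauder basis together with tail bounds for the Gaussian coefficients, rather than GRR --- but both methods end up exhibiting the same kind of compact set, namely a ball in a H\"older-type space of exponent strictly smaller than $\alpha$, with radius of order $\sqrt{R}$ and the factor $\gamma(n)^{-1/2}$ carrying the speed. Your version has the advantage of being elementary modulo the GRR inequality; the only points worth spelling out in a final write-up are the explicit extraction of the modulus $\rho(h)=h^\beta\sqrt{1\vee\log(1/h)}$ from the GRR integral (via $\log(1+4B_n/u^2)\le\log(1+4B_n)+\log(1+u^{-2})$) and the observation that the inclusion $\{\widetilde X^n\notin K_R\}\subset\{B_n\ge a_n\}$ holds trivially on the event $\{B_n=\infty\}$.
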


\section{Conditionally Gaussian processes}\label{sect:chaganty}
In this section we introduce conditionally Gaussian processes and the
Chaganty theorem which allows us to find a LDP for families of such
processes.  We also recall, for sake of completeness,
some results about conditional distributions in Polish spaces. We
referred to Section 3.1 in \cite{Bor} and Section 4.3 in \cite{Bal}.

Let $Y$ and $Z$ two random variables, defined on the same probability
space\break $(\varOmega, \cl F, \P)$, with values in the measurable spaces $
(E_1, \cl E_1)$ and $(E_2,\cl E_2)$ respectively, and let us denote by
$\mu_1,\mu_2$ the (marginal) laws of
$Y$ and $Z$ respectively and by $\mu$ the joint distribution of $(Y,Z)$
on $(E, \cl E)=(E_1\times E_2, \cl E_1 \times\cl E_2)$. A family of
probabilities $(\mu_2(dz|y)))_{y\in E_1}$ on $(E_2,\cl E_2)$ is a
regular version of the conditional law of $Z$ \it given \rm$Y$ if\looseness=-1
\begin{enumerate}
\item For every $B\in\cl E_2$, the map $y\mapsto\mu_2(B|y)$ is $\cl
E_1$-measurable.
\item For every $B\in\cl E_2$ and $A\in\cl E_1$, $\P(Y\in A,Z\in
B)=\int_A \mu_2(B|y) \mu_1(dy)$.
\end{enumerate}
%For every continuous and bounded function $f:E\to\R$,
%
In this case we have
\[
\mu(dy,dz)= \mu_2(dz|y)\mu_1(dy).
\]
%
%Some manipulations (see, for example Section 4.3 in \cite{Bal}) show
%that if $y\mapsto\mu_2(A|y)$ is $\cl G$-measurable
%then $(\mu_2(dz|y))_{y\in G}$
%is a regular conditional distribution of $Z$ given $Y$
%(or of $\mu_2$ given $\mu_1$) if for every bounded
%measurable $f:E\to\R$, $\P$-almost sure we have,
%$$
%\E[f(Z)|Y]= \int_E f(z) \mu_2(dz|Y).
%$$
In this section we will use the notation $(E, \cl{B})$ to indicate a
Polish space (i.e. a complete separable metric space) with the Borel
$\sigma$-field, and we say that a sequence $(x_n)_{n\in\N} \subset
\E$ converges to $x \in E$, $x_n\rightarrow x$, if $d_E
(x_n,x)\rightarrow0$, as $n\rightarrow\infty$, where $d_E$ denotes
the metric on $E$. Regular conditional probabilities do not always exist,
but they exist in many cases.
The following result, that immediately follows from Corollary 3.2.1 in
\cite{Bor}, shows that in Polish spaces the regular version of the
conditional probability is well defined.

%Let $(E_1, \cl{B}_1)$ and $(E_2, \cl{B}_2)$ be two Polish spaces.

%We denote with $(\mu_n)_{n \in\N}$ a sequence of probabilities
%measures on $(E, \cl{B})=(E_1\times E_2, \cl{B}_1 \times\cl{B}_2)$
%(the sequence of \textit{joint distributions}), with
%$\left( \mu_{1n}\right)_{n \in\N} $ the sequence of the
%\textit{marginal distributions} on $ (E_1, \cl{B}_1)$ and with $(
%\mu_{2n}(\cdot| x_{1} ))_{n \in\N}$ the sequence of
%\textit{conditional distributions}
%on $(E_2, \cl{B}_2)$ ($x_1 \in\varOmega_1,$), i.e.
%$$\mu_{n}(B_{1} \times B_{2})=\int_{B_1}\mu_{2n}(B_{2} | x_{1} )\ d
%\mu_{1n}(x_{1}).$$

\begin{proposition} \label{prop:condprob}
Let $(E_1, \cl{B}_1)$ and $(E_2, \cl{B}_2)$ be two Polish spaces
endowed with their Borel
$\sigma$-fields, $\mu$ be a probability measure on $(E, \cl
{B})=(E_1\times E_2, \cl{B}_1 \times\cl{B}_2)$. Let $\mu_i$ be the
marginal probability measure on $(E_i, \cl{B}_i)$, $i=1,2$. Then there
exists $\mu_1$-almost sure a unique
regular version of the conditional law of $\mu_2$ given $\mu_1$, i.e.
\[
\mu(dy,dz)= \mu_2(dz|y)\mu_1(dy).
\]
\end{proposition}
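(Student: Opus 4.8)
The plan is to recognize Proposition~\ref{prop:condprob} as the classical disintegration (regular conditional distribution) theorem, whose only essential hypothesis is that the space carrying the conditioned variable, here $E_2$, is Polish --- equivalently, standard Borel. Writing $Y(y,z)=y$ and $Z(y,z)=z$ for the coordinate projections on $(E,\cl B)$, so that under $\mu$ these are random elements with laws $\mu_1,\mu_2$, I would first establish the existence of a regular conditional law of $Z$ given $Y$ in the model case $E_2=\R$, and then transfer the construction to an arbitrary Polish $E_2$ through a Borel isomorphism. Uniqueness would be handled separately by a monotone-class argument.

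For the real line, the idea is to build the conditional distribution function from conditional expectations. For each rational $q$ fix a $\cl B_1$-measurable version $F_q(\cdot)$ of $\E[\mathbf 1_{\{Z\le q\}}\mid Y=\cdot]$. For each ordered pair of rationals $q_1<q_2$ the monotonicity $F_{q_1}\le F_{q_2}$ holds $\mu_1$-almost surely, and the limits $\lim_{q\to+\infty}F_q=1$ and $\lim_{q\to-\infty}F_q=0$ hold almost surely as well; intersecting these countably many full-measure events produces a single $\mu_1$-full set on which all of them hold at once. On this set I would set $F(x\mid y)=\inf_{q>x,\,q\in\mathbb Q}F_q(y)$, which is then right-continuous and, by the properties just collected, a genuine cumulative distribution function in $x$; I let $\mu_2(\cdot\mid y)$ be the associated Lebesgue--Stieltjes probability measure, and on the exceptional null set I assign an arbitrary fixed probability measure. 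Measurability of $y\mapsto\mu_2(B\mid y)$ and the defining identity $\mu(A\times B)=\int_A\mu_2(B\mid y)\,\mu_1(dy)$ hold for half-lines $B=(-\infty,q]$ by construction and extend to all $B\in\cl B_2$ by the monotone-class theorem.

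To pass to a general Polish space $E_2$, I would invoke Kuratowski's theorem: $E_2$ is Borel isomorphic to a Borel subset $B_0\subset\R$ via some $\varphi$. Pushing $\mu$ forward through $\mathrm{id}\times\varphi$ reduces matters to the real case, producing a kernel on $\R$ which, being supported on $B_0$ for $\mu_1$-almost every $y$, pulls back through $\varphi^{-1}$ to the desired family $(\mu_2(\cdot\mid y))_{y\in E_1}$ on $(E_2,\cl B_2)$. For uniqueness, if $\mu_2(\cdot\mid y)$ and $\tilde\mu_2(\cdot\mid y)$ are two regular versions, then each fixed $B$ gives $\mu_2(B\mid y)=\tilde\mu_2(B\mid y)$ for $\mu_1$-a.e.\ $y$ by uniqueness of conditional expectation; choosing a countable $\pi$-system generating $\cl B_2$ (available since $E_2$ is second countable) places all these agreements on one common full-measure set, and Dynkin's lemma then forces $\mu_2(\cdot\mid y)=\tilde\mu_2(\cdot\mid y)$ as measures for $\mu_1$-almost every $y$.

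The step I expect to be the main obstacle is the careful bookkeeping of null sets in the real-line construction: each regularity property of the $F_q$ holds only outside its own $\mu_1$-null set, and the whole argument hinges on collapsing these countably many exceptions into a single one, so that $F(\cdot\mid y)$ is a bona fide distribution function for every $y$ outside one fixed null set, together with verifying the joint measurability. The Polish hypothesis enters precisely here, since it is the standard-Borel property that legitimizes the reduction to $\R$; the transfer via $\varphi$ and the uniqueness argument are then comparatively routine.
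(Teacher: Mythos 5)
Your argument is correct and is the standard proof of the disintegration theorem; the paper itself gives no proof at all, simply quoting Corollary 3.2.1 of Borkar's \emph{Probability Theory}, which is established along exactly the lines you describe (conditional distribution functions indexed by the rationals for $E_2=\R$, transfer to a general Polish $E_2$ by a Borel isomorphism onto a Borel subset of $\R$, uniqueness via a countable generating $\pi$-system and Dynkin's lemma). The only point worth making explicit in your real-line step is that after setting $F(x\mid y)=\inf_{q>x,\,q\in\mathbb{Q}}F_q(y)$ you still need conditional monotone convergence along $q'\downarrow q$ to verify that $F(q\mid y)$ remains a version of $\E[\mathbf{1}_{\{Z\le q\}}\mid Y=y]$, so that the identity $\mu(A\times(-\infty,q])=\int_A F(q\mid y)\,\mu_1(dy)$ genuinely holds for half-lines before the monotone-class extension is invoked.
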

In what follows we always suppose random variables taking values in
a Polish space.
\begin{definition}
Let $ (Y,Z) $ be a random element on the probability space $(\varOmega,
\cl{F}, \P)$, where $ Z=(Z_t)_{t \in[0,1]} $ is a real process and
$Y$ is an arbitrary random element (a process or a random variable). We
say that $Z$ is a \textit{conditionally Gaussian process} if the
conditional distribution of the process $ Z|Y $ is (almost surely) Gaussian.
We denote by $(Z^y_t)_{t\in[0,1]}$ the Gaussian process $ Z|Y=y $.
\end{definition}

%\begin{example}\rm
% Let $W_1,\ W_2$ be mutually independent Brownian motions on the
%probability space $(\varOmega, \cl{F}, \P)$ and let $ (\varTheta_t,
%\varPsi_t)_{t \in[0,T]} $ be the diffusion process defined by
% \begin{equation}
% \left\{
% \begin{aligned}
% &d\varTheta_{t} = \left( a_{0}(\varPsi_t,t) + a_{1}(\varPsi_t,t)\varTheta_{t}
%\right)\ dt + b(\varPsi_t,t) \ dW^1_{t} \\
% &d\varPsi_t = \left( A(\varPsi_t,t)\right)\ dt + B(\varPsi_t,t) \ dW^2_{t} ,
%\end{aligned} \right.
% \end{equation}
% for $0<t\leq T$, $\varTheta_{0} = x_1 $ and $\varPsi_0 =x_2$. If the
%coefficients are regular enough
% then the process $ (\varTheta_t)_{t \in[0,T]} $ is conditionally
%Gaussian. For further details, see \cite{Lip-Shi2}.
%\end{example}
The main tool that we will use to study LDP for a family of conditionally
Gaussian processes is provided by Chaganty Theorem (Theorem 2.3 in
\cite{Cha}).
Let $(E_1, \cl{B}_1)$ and $(E_2, \cl{B}_2)$ be two Polish spaces.
We denote by $(\mu_n)_{n \in\N}$ a sequence of probabilities
measures on $(E, \cl{B})=(E_1\times E_2, \cl{B}_1 \times\cl{B}_2)$
(the sequence of \textit{joint distributions}), by
$ ( \mu_{1n} )_{n \in\N} $ the sequence of the \textit
{marginal distributions} on $ (E_1, \cl{B}_1)$ and by $(\mu
_{2n}(\cdot| x_{1} ))_{n \in\N}$ the sequence of \textit
{conditional distributions}
on $(E_2, \cl{B}_2)$ ($x_1 \in E_1$,), given by Proposition \ref
{prop:condprob}, i.e.
\[
\mu_{n}(B_{1} \times B_{2})=\int
_{B_1}\mu_{2n}(B_{2} | x_{1} )\
\mu _{1n}(dx_{1}).
\]

\begin{definition}\label{def: LDP continuity condition} Let $(E_1, \cl
{B}_1)$, $(E_2, \cl{B}_2)$ be two Polish spaces and $x_{1} \in E_{1}$.
We say that the sequence of conditional laws $ ( \mu_{2n}(\cdot|
x_{1} )  )_{n \in\N} $ on $(E_2,\cl{B}_2)$ satisfies the LDP
continuously in $x_{1}$ with the rate function $J(\cdot|x_{1})$ and
the speed $\gamma(n)$, or simply, the LDP continuity condition holds, if
\begin{enumerate}
\item[a)] For each $x_{1} \in E_{1},J(\cdot| x_{1})$ is a good rate
function on $E_{2}$.
\item[b)] For any sequence $ ( x_{1n} )_{n \in\N} \subset
E_{1}$ such that $x_{1n} \rightarrow x_{1}$, the sequence of measures
$ ( \mu_{2n}(\cdot| x_{1n} ) )_{n \in\N} $ satisfies a
LDP on $E_{2}$ with the (same) rate function $J(\cdot|x_{1})$ and the
speed $\gamma(n)$.
\item[c)] $J(\cdot| \cdot)$ is lower semicontinuous as a function of
$(x_{1}, x_{2}) \in E_1 \times E_2$.\vadjust{\goodbreak}
\end{enumerate}
\end{definition}

\begin{theorem}[Theorem 2.3 in \cite{Cha}]\label{theorem:chaganty}
%Let $ (E_{1}, \cl{B}_{1}), (E_{2}, \cl{B}_{2})$ be two Polish spaces
%with their associated Borel $\sigma$-fields.
Let $(E_1, \cl{B}_1)$, $(E_2, \cl{B}_2)$ be two Polish spaces. For
$i=1,2$ let $ ( \mu_{\mathit{in}} )_{n \in\N}$ be a sequence of
measures on $ (E_{i}, \cl{B}_{i})$. For $x_1 \in E_1$, let $ (
\mu_{2n}(\cdot| x_{1} ) )_{n \in\N} $ be the sequence of the
conditional laws (of $\mu_{2n}$ given $\mu_{1n}$) on $(E_{2}, \cl
{B}_{2})$. Suppose that the following two conditions are satisfied:
\begin{enumerate}
\item[i)] $ ( \mu_{1n} )_{n \in\N} $ satisfies a LDP on
$E_1$ with the good rate function $I_{1}(\cdot)$ and the speed $
\gamma(n)$.
\item[ii)] For every $x_1 \in E_1$, the sequence $ ( \mu
_{2n}(\cdot| x_{1} ) )_{n \in\N} $ satisfies the LDP
continuity condition on $E_2$ with the rate function $J( \cdot|x_{1})$
and the speed $ \gamma(n)$.
\end{enumerate}
Then the sequence of joint distributions $ ( \mu_{n} )_{n
\in\N}$ satisfies a WLDP on $E=E_1\times E_2$ with the speed $ \gamma
(n) $ and the rate function
\[
I(x_{1},x_{2}) = I_{1}(x_{1}) +
J(x_{2}| x_{1}),\quad x_1 \in E_1,
\, x_2 \in E_2.
\]
The sequence of marginal distributions $ ( \mu_{2n} )_{n
\in\N} $ defined on $ (E_{2}, \cl{B}_{2})$, satisfies a LDP with
the speed $ \gamma(n)$ and the rate function
\[
I_{2}(x_{2})= \inf_{x_{1} \in E_{1} }I(x_{1},x_{2}).
\]
Moreover, if $I(\cdot,\cdot)$ is a good rate function then $ (
\mu_{n} )_{n \in\N} $ satisfies a LDP and $I_{2}(\cdot) $ is
a good rate function.
\end{theorem}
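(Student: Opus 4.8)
The plan is to prove the three assertions in order: first the weak LDP for the joint laws $(\mu_n)$, then the full LDP for the marginals $(\mu_{2n})$, and finally the upgrade to a full LDP for $(\mu_n)$ under the goodness hypothesis. Everything rests on the disintegration
\[
\mu_n(B_1\times B_2)=\int_{B_1}\mu_{2n}(B_2\mid x_1)\,\mu_{1n}(dx_1),
\]
which lets me estimate any rectangle by pairing a marginal estimate for $\mu_{1n}$ with a conditional estimate for $\mu_{2n}(\cdot\mid x_1)$. As a preliminary step I would check that $I(x_1,x_2)=I_1(x_1)+J(x_2\mid x_1)$ is lower semicontinuous, hence a legitimate rate function; this is immediate from the lower semicontinuity of $I_1$ and condition (c).

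For the weak lower bound on an open set $G$, I would fix $(a,b)\in G$ with $I(a,b)<\infty$, choose a product neighbourhood $B(a,\delta)\times V\subseteq G$ with $V$ open containing $b$, and use $\mu_n(G)\ge\big(\inf_{x_1\in B(a,\delta)}\mu_{2n}(V\mid x_1)\big)\,\mu_{1n}(B(a,\delta))$. After applying $\frac1{\gamma(n)}\log(\cdot)$ and letting $n\to\infty$ and then $\delta\to0$, the marginal factor yields $-I_1(a)$ by hypothesis (i) and the conditional factor yields $-J(b\mid a)$; optimising over $(a,b)\in G$ gives $\liminf_n\frac1{\gamma(n)}\log\mu_n(G)\ge-\inf_G I$. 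For the weak upper bound on a compact $K$ I would cover $K$ by finitely many product sets $B(a_i,\delta)\times V_i$, estimate each by $\mu_{1n}(\overline{B(a_i,\delta)})\cdot\sup_{x_1\in B(a_i,\delta)}\mu_{2n}(\overline{V_i}\mid x_1)$, and combine. \emph{The main obstacle is the uniform transfer of the conditional estimates}: condition (b) supplies an LDP only along sequences $x_{1n}\to x_1$, whereas I need $\liminf_n\frac1{\gamma(n)}\log\inf_{x_1\in B(a,\delta)}\mu_{2n}(V\mid x_1)\to-\inf_V J(\cdot\mid a)$, and the analogous $\limsup$/$\sup$ statement, as $\delta\to0$. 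I would obtain this by a contradiction argument: choosing an approximate extremiser $x_1$ in each shrinking ball and stringing these together along a subsequence produces a sequence converging to $a$ that violates the conditional LDP guaranteed by (b). This is precisely where the \emph{continuity} in the LDP continuity condition, together with the goodness in (a) and the lower semicontinuity in (c), is indispensable.

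For the marginal LDP of $(\mu_{2n})$ I would treat the two bounds separately. The lower bound on an open $V\subseteq E_2$ is free, since $\mu_{2n}(V)=\mu_n(E_1\times V)$ with $E_1\times V$ open, so the joint weak lower bound already gives $\liminf_n\frac1{\gamma(n)}\log\mu_{2n}(V)\ge-\inf_{x_2\in V}\inf_{x_1}I(x_1,x_2)=-\inf_V I_2$. The upper bound on a closed $C$ cannot be read off the weak LDP, because $E_1\times C$ is not compact; instead I would integrate directly. Exponential tightness of $(\mu_{1n})$, a consequence of the good LDP in (i), provides compacts $K_R\subseteq E_1$ with $\frac1{\gamma(n)}\log\mu_{1n}(K_R^c)\le-R+o(1)$. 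Splitting $\mu_{2n}(C)=\int_{K_R}\mu_{2n}(C\mid x_1)\,\mu_{1n}(dx_1)+\int_{K_R^c}\mu_{2n}(C\mid x_1)\,\mu_{1n}(dx_1)$, covering the compact $K_R$ by finitely many balls, and inserting the uniform conditional upper bound from the previous step (together with lower semicontinuity and goodness), each piece over $K_R$ is bounded by $e^{-\gamma(n)(\inf_C I_2-\ep)}$ while the remainder is at most $e^{-\gamma(n)(R+o(1))}$. Letting $R\to\infty$ and $\ep\to0$ yields the upper bound on every closed set, and hence the full marginal LDP with rate $I_2$.

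Finally, to upgrade the weak LDP for $(\mu_n)$ to a full LDP when $I$ is good, I would verify exponential tightness of $(\mu_n)$ and invoke the standard fact that a weak LDP together with exponential tightness yields the full LDP. Goodness of $I$ transfers to $I_2=\inf_{x_1}I(\cdot\,,\cdot)$ by the contraction principle, since continuity of the projection sends the compact level sets of $I$ onto the level sets of $I_2$; on the Polish space $E_2$ this makes $(\mu_{2n})$ exponentially tight, and combined with the exponential tightness of $(\mu_{1n})$ it produces products $K_R^1\times K_R^2$ exhausting $E_1\times E_2$ exponentially fast. Intersecting an arbitrary closed set with these compacts reduces its upper bound to the compact case already settled, and goodness of $I$ is inherited. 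I expect this last bookkeeping to be routine once the diagonal transfer of conditional bounds in the second step is secured, which remains the crux of the whole argument.
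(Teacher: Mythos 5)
The paper does not prove this theorem: it is imported verbatim as Theorem~2.3 of Chaganty \cite{Cha}, so there is no in-paper argument to compare yours against. Measured against Chaganty's original proof, your sketch follows essentially the same route: disintegrate $\mu_n$ over $\mu_{1n}$, obtain the weak joint LDP from product neighbourhoods, and get the marginal upper bound by combining exponential tightness of $(\mu_{1n})$ (which does hold, though it is the nontrivial converse direction of the goodness/tightness equivalence on Polish spaces) with a finite cover of the compact $K_R$. The step you correctly single out as the crux --- upgrading the sequential LDP of condition (b) to estimates uniform over shrinking balls, via a diagonal/contradiction argument (taking care to extend the chosen near-extremisers to a full sequence $x_{1n}\to a$ so that (b) applies) --- is precisely the content of Chaganty's preparatory lemmas, and your derivation of the goodness of $I_2$ by projecting the compact level sets of $I$ is also the standard one. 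I see no gap in the outline.
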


\section{Gaussian process with random mean and random variance} \label
{sect:main}
Let $\alpha> 0$ and define $\cl C_{\alpha}([0,1]) =\{y\in\cl
{C}([0,1]) : y(t)\geq\alpha, \,\, t\in[0,1]\}$ (with the uniform
norm on compact sets). $\cl C_{\alpha}([0,1])$ is a Polish space.
Consider the family of processes $(Y^n,Z^n)_{n \in\N}$, where
$(Y^n)_{n \in\N}=(Y_1^n,Y_2^n)_{n \in\N}$ is a family of processes
with paths in $\cl C_{\alpha}([0,1])\times\cl{C}([0,1])$ and for $n
\in\N$, $Z^n=X^nY^n_1 + Y_2^n$ with $(Y^n)_{n \in\N}$ independent
of $(X^n)_{n \in\N}$. Suppose $ ((X^n_t)_{t \in[0,1]})_{n \in\N} $
is family of continuous centered Gaussian processes which satisfy the
hypotheses of Theorem \ref{theorem:ldp-gaussian} and
suppose that $(Y^n)_{n \in\N}$ satisfies a LDP with the good rate
function $I_{Y}$ and the speed $\gamma(n)$. We want to prove a LDP
principle for $(Z^n)_{n \in\N}$.

%{\color{red}\begin{example}[Sub-Gaussian]\label{ex:sub}\rm A process
%$(Z_t)_{t\geq0}$ is called sub-Gaussian if it can be written in the
%form
%$Z_t=A^{1/2} X_t$, where $A$ is a stable random variable and $(XZ_t)_{t
%\geq0}$ is a Gaussian process independent of $A$. This is an example
%of Gaussian process with random variance.
%\end{example}}

%%%%%%%%%%%%%%%%%%%%%%%%%%%%%%%%%%%%%%%%%%%%%%%%%%%
%
\begin{proposition}\rm\label{proposition:Xy1+ y2}
Let $(( X_{t}^n)_{t \in[0,1]})_{n\in\N}$ be a family of continuous
Gaussian processes which satisfies the hypotheses of Theorem \ref
{theorem:ldp-gaussian}
and let $y=(y_1,y_2) \in\cl C_{\alpha}([0,1])\times\cl{C}([0,1])$. Then
the family $((X^n_{t}y_1(t)+y_2(t) )_{t \in[0,1]})_{n\in\N}$ is
still a family of
continuous Gaussian processes which satisfies the hypotheses of Theorem
\ref{theorem:ldp-gaussian} with the same speed function and limit
covariance function (depending only on $y_1$) $k^{y_1}$ given by
\begin{equation}
\label{eq:ky} k^{y_1}(s,t)= y_1(s)y_1(t)\bar
k(s,t).
\end{equation}

Therefore, also $((X^n_{t}y_1(t)+y_2(t))_{t \in[0,1]})_{n\in\N}$
satisfies a LDP with the good rate function
\begin{equation}
\label{def: rate function p. gaussiano cambio RKHS} \varLambda^{*}_y(z) =
\begin{cases}
\frac{1}{2}\lVert z\rVert^{2}_{\bar{\cl{H}}_{y_1}}, & z
\in\bar {{\cl{H}}_{y_1} },\\
+\infty& \text{otherwise},\ \end{cases} %
 =
\begin{cases}
\frac{1}{2} \lVert\frac{z-y_2}{y_1} \rVert^{2}_{\bar{\cl
{H}}}, & \frac{z-y_2}{y_1} \in\bar{\cl{H}},
\\
+\infty& \text{otherwise},
\end{cases}
\end{equation}
where $\bar{\cl{H}}_{{y_1}}$ is the RKHS associated to the covariance
function defined in (\ref{eq:ky}).
\end{proposition}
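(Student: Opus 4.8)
The plan is to reduce the statement to Theorem~\ref{theorem:ldp-gaussian} applied to the \emph{centered} auxiliary family $V^n:=X^n y_1$, and then to recover the general case by a deterministic translation by $y_2$. Since $y_1,y_2$ are deterministic and continuous and each $X^n$ is a continuous centered Gaussian process, the process $X^n_t y_1(t)+y_2(t)$ is again continuous and Gaussian; it is centered exactly when $y_2\equiv 0$, which is why the mean must be handled separately (the mean condition (\ref{eq:meanlimit}) of Theorem~\ref{theorem:ldp-gaussian} refers to a centered limit).

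For $V^n=X^n y_1$ I would check the hypotheses of Theorem~\ref{theorem:ldp-gaussian}. Setting $d\lambda_{y_1}(t):=y_1(t)\,d\lambda(t)$, which is again a signed Borel measure because $y_1$ is continuous and bounded, one has $\langle\lambda,V^n\rangle=\langle\lambda_{y_1},X^n\rangle$. Hence (\ref{eq:meanlimit}) gives $\E[\langle\lambda,V^n\rangle]=\E[\langle\lambda_{y_1},X^n\rangle]\to 0$, while (\ref{eq:covlimit}) gives
\[
\gamma(n)\Var(\langle\lambda,V^n\rangle)=\gamma(n)\Var(\langle\lambda_{y_1},X^n\rangle)\longrightarrow\int_0^1\int_0^1 \bar k(t,s)\,d\lambda_{y_1}(t)\,d\lambda_{y_1}(s),
\]
which by (\ref{eq:ky}) equals $\int_0^1\int_0^1 k^{y_1}(t,s)\,d\lambda(t)\,d\lambda(s)$. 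This $k^{y_1}$ is continuous, symmetric and positive definite, since $\sum_{i,j}\bigl(c_iy_1(t_i)\bigr)\bigl(c_jy_1(t_j)\bigr)\bar k(t_i,t_j)\ge 0$; indeed it is the covariance of the continuous Gaussian process $y_1U$ when $U$ has covariance $\bar k$. For exponential tightness I would avoid Proposition~\ref{proposition:cond-exp-tight} (which would require Hölder control on $y_1$) and instead use that exponential tightness is preserved by continuous maps: $x\mapsto y_1x$ is continuous on $\cl C([0,1])$, so if $K_R$ is a compact set witnessing the tightness of $X^n$ then $y_1K_R$ is compact and $\{V^n\notin y_1K_R\}\subseteq\{X^n\notin K_R\}$. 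Theorem~\ref{theorem:ldp-gaussian} then yields a LDP for $V^n$ at speed $\gamma(n)$ with good rate function $\frac{1}{2}\|\cdot\|^2_{\bar{\cl H}_{y_1}}$.

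To reinstate the mean, note that $z\mapsto z+y_2$ is a homeomorphism of $\cl C([0,1])$, so the contraction principle transports the LDP for $V^n$ to one for $X^n y_1+y_2=V^n+y_2$, the rate function being translated to $z\mapsto\frac{1}{2}\|z-y_2\|^2_{\bar{\cl H}_{y_1}}$; this is the first of the two displayed expressions for $\varLambda^*_y$.

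The remaining point, and the one I expect to be the main obstacle, is the identification of the weighted RKHS $\bar{\cl H}_{y_1}$. Because $y_1\ge\alpha>0$, multiplication $T\colon h\mapsto y_1h$ is a bijection, and I would prove it is an isometry of $\bar{\cl H}$ onto $\bar{\cl H}_{y_1}$ by testing it on reproducing kernels: $k^{y_1}(\cdot,t)=y_1\cdot\bigl(y_1(t)\bar k(\cdot,t)\bigr)=T\bigl(y_1(t)\bar k(\cdot,t)\bigr)$, and
\[
\bigl\langle y_1(s)\bar k(\cdot,s),\,y_1(t)\bar k(\cdot,t)\bigr\rangle_{\bar{\cl H}}=y_1(s)y_1(t)\bar k(s,t)=k^{y_1}(s,t)=\bigl\langle k^{y_1}(\cdot,s),\,k^{y_1}(\cdot,t)\bigr\rangle_{\bar{\cl H}_{y_1}},
\]
so $T$ preserves inner products on the dense span of the kernels and extends to a surjective isometry. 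Applying $T^{-1}(w)=w/y_1$ with $w=z-y_2$ then converts $\frac{1}{2}\|z-y_2\|^2_{\bar{\cl H}_{y_1}}$ into $\frac{1}{2}\|(z-y_2)/y_1\|^2_{\bar{\cl H}}$, with $z-y_2\in\bar{\cl H}_{y_1}$ if and only if $(z-y_2)/y_1\in\bar{\cl H}$, yielding the second displayed expression. By comparison, the mean translation and the covariance and tightness verifications are routine.
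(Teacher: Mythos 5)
Your proof is correct, but it is genuinely different from the paper's, which consists of the single line ``this is a simple application of the contraction principle'': the authors push the LDP for $(X^n)_{n\in\N}$ (rate $\frac12\|\cdot\|^2_{\bar{\cl H}}$) through the continuous affine bijection $x\mapsto y_1x+y_2$ of $\cl C([0,1])$, which immediately yields the second displayed form $\frac12\|(z-y_2)/y_1\|^2_{\bar{\cl H}}$ of the rate function. What the contraction principle does \emph{not} give is the first sentence of the proposition --- that the new family again satisfies the hypotheses of Theorem \ref{theorem:ldp-gaussian} with limit covariance $k^{y_1}$ --- nor the identification of the rate with the $\bar{\cl H}_{y_1}$-norm; your route supplies exactly these missing pieces, by the substitution $d\lambda_{y_1}=y_1\,d\lambda$ in conditions (\ref{eq:meanlimit})--(\ref{eq:covlimit}), the transfer of exponential tightness through $\{y_1X^n\notin y_1K_R\}\subseteq\{X^n\notin K_R\}$ (wisely avoiding Proposition \ref{proposition:cond-exp-tight}, which would need H\"older regularity of $y_1$), and the kernel-level isometry $h\mapsto y_1h$ of $\bar{\cl H}$ onto $\bar{\cl H}_{y_1}$. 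Two small remarks. First, your isometry argument should note explicitly that the unique isometric extension from the span of the kernels to the completion coincides with pointwise multiplication by $y_1$; this follows because norm convergence in a RKHS implies pointwise convergence. Second, your computation produces $\frac12\|z-y_2\|^2_{\bar{\cl H}_{y_1}}$, whereas the first case of (\ref{def: rate function p. gaussiano cambio RKHS}) reads $\frac12\|z\|^2_{\bar{\cl H}_{y_1}}$; the two agree only when $y_2\equiv0$, so your formula in fact corrects what appears to be a slip in the statement (consistent with Remark \ref{remark: cambio RKHS} and with the second case, which is the form actually used later). Your separate handling of the mean, noting that (\ref{eq:meanlimit}) can only hold for the centered family, is the right reading of the loosely stated hypothesis-preservation claim.
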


\begin{proof}
This is a simple application of the contraction principle.\vadjust{\goodbreak}
\end{proof}

\begin{remark}\rm\label{remark: cambio RKHS}
If $y_1(t)=y_1> 0$ for all $t \in[0,1]$, then we have
\begin{equation*}
\varLambda^{*}_y(z)=
\begin{cases}
\frac{1}{2y_1^2} \lVert z -y_2 \rVert^{2}_{\cl{H}},
& z-y_2 \in\bar{\cl{H}}, \\
+\infty& \text{otherwise}.
\end{cases}
\end{equation*}
\end{remark}

\begin{definition}
Let $ (E, d_{E}) $ be a metric space, and let $(\mu_n)_{n\in\N}$,
$(\tilde{\mu}_{n})_{n\in\N}$ be two families of probability
measures on $ E $. Then $(\mu_n)_{n\in\N}$ and $(\tilde{\mu
}_n)_{n\in\N}$ are exponentially equivalent (at the speed $\gamma
(n)$) if there exist a family of probability spaces $ ((\varOmega,\cl
{F}^n, \P^n))_{n\in\N} $ and two families of $ E $-valued random
variables $ (Z^n)_{n\in\N} $ and $ (\tilde{Z}^n)_{n\in\N} $ such
that, for any $ \delta> 0 $, the set $ \lbrace\omega: d_E(\tilde
{Z}^n(\omega),Z^n(\omega))>\delta\rbrace$
is $ \cl{F}^n $-measurable and
\[
\limsup_{n\to+\infty} \gamma(n) \log\P^n
\bigl(d_E\bigl(\tilde{Z}^n(\omega ),Z^n(
\omega)\bigr)>\delta\bigr) = -\infty.
\]
\end{definition}

As far as the LDP is concerned exponentially equivalent measures are
indistinguishable. See Theorem 4.2.13 in \cite{Dem-Zei}.

\begin{proposition}\rm\label{proposition:expeq}
Let $(( X_{t}^n)_{t \in[0,1]})_{n\in\N}$ be an exponential tight (at
the speed $\gamma(n)$) family of continuous Gaussian processes.
Let $(y^n)_{n\in\N} \subset\cl C_{\alpha}([0,1])\times\cl{C}([0,1])
$ such that $y^n \to y$ in $\cl C_{\alpha}([0,1])\times\cl{C}([0,1])$.
Then, the family of processes $ (( y_2^n(t)+ y_1^n(t)X^n(t)))_{t\in
[0,1]})_{n\in\N}$ is exponentially equivalent to
$(( y_2(t) + y_1(t)X^n(t)))_{n\in\N}$.
\end{proposition}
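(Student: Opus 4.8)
The plan is to verify the exponential-equivalence estimate directly. Both families are realized on the common probability space carrying $(X^n)_{n\in\N}$, and the metric on $E_2=\cl C([0,1])$ is the sup-norm, so $d_E(\tilde Z^n,Z^n)=\sup_{t\in[0,1]}|(y_2^n(t)-y_2(t))+(y_1^n(t)-y_1(t))X^n_t|$. Since $y^n$ and $y$ are fixed continuous functions, the map $x\mapsto\sup_{t}|(y_2^n-y_2)(t)+(y_1^n-y_1)(t)x(t)|$ is continuous on $\cl C([0,1])$, so the set $\{d_E(\tilde Z^n,Z^n)>\delta\}$ is Borel, hence measurable. It then remains to show, for every $\delta>0$,
\[
\limsup_{n\to+\infty}\frac{1}{\gamma(n)}\log \P\Bigl(\sup_{t\in[0,1]}\bigl|\bigl(y_2^n(t)-y_2(t)\bigr)+\bigl(y_1^n(t)-y_1(t)\bigr)X^n_t\bigr|>\delta\Bigr)=-\infty.
\]

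First I would decouple the two perturbations by the triangle inequality. Setting $\delta_n=\|y_2^n-y_2\|_\infty$ and $\epsilon_n=\|y_1^n-y_1\|_\infty$, the hypothesis $y^n\to y$ in $\cl C_{\alpha}([0,1])\times\cl C([0,1])$ gives $\delta_n\to0$ and $\epsilon_n\to0$, and
\[
d_E(\tilde Z^n,Z^n)\le \delta_n+\epsilon_n\|X^n\|_\infty .
\]
Hence, for $n$ large enough that $\delta_n\le\delta/2$, one has the inclusion $\{d_E(\tilde Z^n,Z^n)>\delta\}\subseteq\{\,\|X^n\|_\infty>\delta/(2\epsilon_n)\,\}$ (the right-hand event being empty when $\epsilon_n=0$). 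This reduces the problem to controlling the probability that $\|X^n\|_\infty$ exceeds a threshold that grows to $+\infty$.

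Next I would extract a sup-norm tail bound from exponential tightness. By Arzel\`a--Ascoli every compact $K\subset\cl C([0,1])$ is uniformly bounded, so the compact sets $K_R$ in the definition of exponential tightness satisfy $K_R\subseteq\{\|x\|_\infty\le C_R\}$ for some finite $C_R$; therefore $\{\|X^n\|_\infty>C_R\}\subseteq\{X^n\notin K_R\}$ and $\limsup_n\frac{1}{\gamma(n)}\log\P(\|X^n\|_\infty>C_R)\le -R$. Since $M\mapsto\P(\|X^n\|_\infty>M)$ is non-increasing for each $n$, the function $g(M):=\limsup_n\frac{1}{\gamma(n)}\log\P(\|X^n\|_\infty>M)$ is non-increasing and satisfies $g(M)\to-\infty$ as $M\to+\infty$.

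The main obstacle is that the threshold $\delta/(2\epsilon_n)$ is $n$-dependent, so I cannot plug in a single value of $M$. I would resolve this with a two-step limit: fix an arbitrary $M>0$; since $\epsilon_n\to0$, for all $n$ large enough we have both $\delta_n\le\delta/2$ and $\delta/(2\epsilon_n)>M$, whence $\{d_E(\tilde Z^n,Z^n)>\delta\}\subseteq\{\|X^n\|_\infty>M\}$ eventually. Taking $\limsup_n\frac{1}{\gamma(n)}\log(\cdot)$ yields $\limsup_n\frac{1}{\gamma(n)}\log\P(d_E(\tilde Z^n,Z^n)>\delta)\le g(M)$ for every $M>0$, and letting $M\to+\infty$ forces this quantity to equal $-\infty$. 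This is precisely the required estimate, so the two families are exponentially equivalent.
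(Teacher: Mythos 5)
Your proof is correct and takes essentially the same route as the paper's: the same triangle-inequality decomposition into a $\|y_2^n-y_2\|_\infty$ term (which is eventually below $\delta/2$) and a $\|y_1^n-y_1\|_\infty\,\|X^n\|_\infty$ term controlled by exponential tightness. The only difference is that you spell out the step the paper leaves implicit, namely why exponential tightness forces $\limsup_n\frac{1}{\gamma(n)}\log\P(\|X^n\|_\infty>M_n)=-\infty$ when the thresholds $M_n\to+\infty$ (boundedness of compact sets plus a two-step limit), which is a welcome clarification but not a different argument.
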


\begin{proof}
Let $Z^n(t)=y_2(t)+ y_1(t)X^n(t)$ and $\tilde Z^n(t)=y_2^n(t) +
y_1^n(t)X^n(t)$ for $t\in[0,1]$, $n\in\N$.
Then, for any $\delta>0$,
\[
\P\bigl(\big\| Z^n - \tilde Z^n\big\|_{\infty}> \delta
\bigr)\leq\P \biggl( \big\|X^n\big\|_{\infty}\, \big\|y_1^n-y_1\big\|_{\infty}>
\frac{\delta}2 \biggr) + \P \biggl( \big\|y_2^n-y_2\big\|_{\infty}>
\frac{\delta}2 \biggr).
\]
For $n$ large enough $||y_2^n-y_2||_{\infty}\leq\frac{\delta}2$ and
thanks to (\ref{definition:exponential-tightness})
\[
\limsup_{n\to+\infty} \frac{1}{\gamma(n)}\log\P \biggl(
\big\|X^n\big\|_{\infty} >\frac{\delta}{ 2\|y_1^n-y_1\|_{\infty}} \biggr)=-\infty,
\]
therefore
\[
\limsup_{n\to+\infty}\frac{1}{\gamma(n)}\log\P\bigl(\big\| Z^n -
\tilde Z^n\big\|_{\infty}> \delta\bigr)=-\infty.\qedhere
\]
\end{proof}

Let us denote $J(z|y)=\varLambda^*_y(z) $, for $z \in\cl{C}([0,1])$ and
$y\in\cl C_{\alpha}([0,1])\times\cl{C}([0,1])$. We want to prove the
lower semicontinuity of $J(\cdot|\cdot)$.

\begin{proposition} \label{proposition:lowsemicon} If
$(z^n,y^n)\rightarrow(z ,y )$ in $\cl{C}([0,1])\times\cl C_{\alpha}
([0,1])\times\cl{C}([0,1])$, then
\[
\liminf_{n\to+\infty}J\bigl(z^n|y^n\bigr) \geq
J(z |y ).
\]
\end{proposition}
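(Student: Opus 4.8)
The plan is to reduce the statement to the lower semicontinuity of a single fixed functional composed with a continuous change of variables. By Proposition \ref{proposition:Xy1+ y2} we have $J(z|y)=\varLambda^*_y(z)=\Psi\bigl((z-y_2)/y_1\bigr)$, where I define $\Psi:\cl C([0,1])\to[0,+\infty]$ by $\Psi(w)=\frac12\|w\|_{\bar{\cl H}}^2$ if $w\in\bar{\cl H}$ and $\Psi(w)=+\infty$ otherwise. Writing $T(z,y_1,y_2)=(z-y_2)/y_1$, the claim becomes the lower semicontinuity of $\Psi\circ T$ on $\cl C([0,1])\times\cl C_{\alpha}([0,1])\times\cl C([0,1])$. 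Since the composition of a continuous map with a lower semicontinuous one is lower semicontinuous (if $x^n\to x$ then $T(x^n)\to T(x)$ and $\liminf_n\Psi(T(x^n))\ge\Psi(T(x))$), it suffices to prove that $T$ is continuous and that $\Psi$ is lower semicontinuous.

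First I would check continuity of $T$. Assume $(z^n,y_1^n,y_2^n)\to(z,y_1,y_2)$ uniformly and set $a^n=z^n-y_2^n$, $a=z-y_2$, so that $a^n\to a$ and $y_1^n\to y_1$ uniformly, with $y_1^n,y_1\ge\alpha>0$. From the identity $\frac{a^n}{y_1^n}-\frac{a}{y_1}=\frac{(a^n-a)y_1+a(y_1-y_1^n)}{y_1^n y_1}$ and the bound $y_1^n y_1\ge\alpha^2$ one obtains $\|T(z^n,y_1^n,y_2^n)-T(z,y_1,y_2)\|_\infty\le\alpha^{-2}\bigl(\|a^n-a\|_\infty\|y_1\|_\infty+\|a\|_\infty\|y_1-y_1^n\|_\infty\bigr)\to0$. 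Hence $T$ is continuous, and this is precisely where the uniform lower bound $\alpha$ on the first coordinate of $y$ is used.

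Next I would establish the lower semicontinuity of $\Psi$. Applying Theorem \ref{theorem:cramer-transform} to the centered continuous Gaussian process with covariance $\bar k$ (the limit covariance furnished by Theorem \ref{theorem:ldp-gaussian}), the functional $\Psi$ coincides with the Cram\'er transform $\varLambda^*$ of $\varLambda(\lambda)=\frac12\int_0^1\int_0^1\bar k(t,s)\,d\lambda(t)\,d\lambda(s)$, that is $\Psi(w)=\sup_{\lambda\in\cl M[0,1]}\bigl(\langle\lambda,w\rangle-\varLambda(\lambda)\bigr)$. For each fixed $\lambda$ the map $w\mapsto\langle\lambda,w\rangle-\varLambda(\lambda)$ is affine and continuous on $\cl C([0,1])$, its linear part $\langle\lambda,\cdot\rangle$ being a bounded functional; a pointwise supremum of continuous functions is lower semicontinuous, so $\Psi$ is lower semicontinuous. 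Combining this with the continuity of $T$ yields $\liminf_n J(z^n|y^n)=\liminf_n\Psi(T(z^n,y_1^n,y_2^n))\ge\Psi(T(z,y_1,y_2))=J(z|y)$.

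I do not expect a serious obstacle: the whole content lies in observing that the substitution $(z,y_1,y_2)\mapsto(z-y_2)/y_1$ linearizes the $y$-dependence of the rate function into the fixed Gaussian rate functional, after which lower semicontinuity is the standard fact that a Legendre--Fenchel transform is lower semicontinuous. The only point requiring mild care is the infinite-value case: when $w:=(z-y_2)/y_1\notin\bar{\cl H}$ one has $\Psi(w)=+\infty$, and lower semicontinuity of $\Psi$ then forces $J(z^n|y^n)\to+\infty$, which is exactly the desired inequality in that regime.
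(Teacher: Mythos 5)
Your proof is correct and follows essentially the same route as the paper's: both arguments reduce the claim to the continuity of the substitution $h^n=(z^n-y_2^n)/y_1^n\rightarrow h=(z-y_2)/y_1$ in $\cl C([0,1])$ (which is where the uniform lower bound $\alpha$ on $y_1^n$ enters) combined with the lower semicontinuity of $\frac{1}{2}\lVert\cdot\rVert^{2}_{\bar{\cl H}}$, extended by $+\infty$ off $\bar{\cl H}$. The only difference is that the paper asserts these two facts without comment, whereas you supply the missing details, namely the explicit estimate for the quotient and the identification of the norm functional with the Cram\'er transform of Theorem \ref{theorem:cramer-transform}, i.e. a supremum of continuous affine functionals, hence lower semicontinuous.
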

\begin{proof}
Thanks to the lower semicontinuity of $ \lVert\cdot\rVert
^{2}_{\bar{\cl{H}}} $
\begin{align*}
\liminf_{(y^n,z^n)\rightarrow(y ,z )} J
\bigl(z^n|y^n\bigr)& =\liminf_{(y^n,z^n)\rightarrow(y ,z )}
\frac{1}{2} \biggl\lVert\frac{z^n-
y_2^n}{y^n_1} \biggr\rVert^{2}_{\bar{\cl{H}}}
\\
&=\liminf_{h^n\rightarrow h } \frac{1}{2} \bigl\lVert h^n
\bigr\rVert^{2}_{\bar{\cl{H}}} \geq\frac{1}{2} \lVert h
\rVert^{2}_{\bar{\cl{H}}} = \frac{1}{2} \biggl\lVert
\frac{z - y_2
}{y _1} \biggr\rVert^{2}_{\bar{\cl{H}}}= J(x |y )
\end{align*}
where, $h^n=\frac{z^n- y^n_2}{y^n_1} \overset{\cl
{C}([0,1])}{\longrightarrow} h =\frac{z - y _2}{y _1}$.
\end{proof}

\begin{theorem}
Consider the family of processes $(Y^n,Z^n)_{n \in\N}$, where
$(Y^n)_{n \in\N}=\break(Y_1^n,Y_2^n)_{n \in\N}$ is a family of processes
with paths in $\cl C_{\alpha}([0,1])\times\cl{C}([0,1])$ and for $n
\in\N$, $Z^n=X^nY^n_1 + Y_2^n$ with $(Y^n)_{n \in\N}$ independent
of $(X^n)_{n \in\N}$. Suppose $ ((X^n_t)_{t \in[0,1]})_{n \in\N} $
is family of continuous centered Gaussian processes which satisfy the
hypotheses of Theorem \ref{theorem:ldp-gaussian} and
suppose that $(Y^n)_{n \in\N}$ satisfies a LDP with the good rate
function $I_{Y}$ and the speed $\gamma(n)$. Then $(Y^n,Z^n)_{n \in\N
}$ satisfies the WLDP with the speed $\gamma(n)$ and the rate function
\[
I(y,z)= I_Y(y) + J(z|y),
\]
and $(Z^n)_{n \in\N}$ satisfies the LDP with the speed $\gamma(n)$
and the rate function
\[
I_Z(z)= \inf_{y\in\cl C_{\alpha}([0,1]) }\bigl\{I_Y(y) +
J(z|y)\bigr\}.
\]
\end{theorem}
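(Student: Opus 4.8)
The plan is to verify the two hypotheses of the Chaganty Theorem (Theorem \ref{theorem:chaganty}) for the sequence of joint laws of $(Y^n,Z^n)_{n\in\N}$, and then simply read off the conclusions. Here the first Polish space is $E_1 = \cl C_{\alpha}([0,1])\times\cl{C}([0,1])$, carrying the laws of $(Y^n)_{n\in\N}$, and the second is $E_2 = \cl{C}([0,1])$, carrying the conditional laws of $Z^n$ given $Y^n=y$. Hypothesis i) is immediate: by assumption $(Y^n)_{n\in\N}$ satisfies a LDP on $E_1$ with the good rate function $I_Y$ and speed $\gamma(n)$, which is exactly what is required. The work is therefore concentrated in hypothesis ii), namely showing that the family of conditional laws $(\mu_{2n}(\cdot\mid y))_{n\in\N}$ satisfies the LDP continuity condition of Definition \ref{def: LDP continuity condition} with rate function $J(z\mid y)=\varLambda^*_y(z)$.

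To check the three parts of the LDP continuity condition I would argue as follows. For part a), fix $y=(y_1,y_2)\in E_1$; since $Z^n\mid(Y^n=y)$ is the Gaussian process $y_2+y_1 X^n$, Proposition \ref{proposition:Xy1+ y2} tells us that for each fixed $y$ this family satisfies a LDP with the good rate function $J(\cdot\mid y)=\varLambda^*_y$, so in particular $J(\cdot\mid y)$ is a good rate function on $E_2$ for every $y$. Part c), the joint lower semicontinuity of $J(\cdot\mid\cdot)$ on $E_1\times E_2$, is precisely the content of Proposition \ref{proposition:lowsemicon} and so is already established. The remaining and most delicate point is part b): for any sequence $y^n\to y$ in $E_1$ the conditional laws $(\mu_{2n}(\cdot\mid y^n))_{n\in\N}$, that is the laws of $y_2^n+y_1^n X^n$, must satisfy a LDP with the \emph{same} rate function $J(\cdot\mid y)=\varLambda^*_y$ as the limit.

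For part b) my strategy is to reduce the moving-coefficient family to the fixed-coefficient family by exponential equivalence. Proposition \ref{proposition:expeq} shows exactly that, when $y^n\to y$, the perturbed processes $(y_2^n+y_1^n X^n)_{n\in\N}$ are exponentially equivalent (at speed $\gamma(n)$) to the frozen processes $(y_2+y_1 X^n)_{n\in\N}$. Since exponentially equivalent families are indistinguishable from the point of view of large deviations (Theorem 4.2.13 in \cite{Dem-Zei}, as recalled after the definition of exponential equivalence), and since by part a) the frozen family satisfies a LDP with rate function $\varLambda^*_y$, the moving family satisfies the same LDP with the same rate function $\varLambda^*_y=J(\cdot\mid y)$. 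This establishes part b) and completes the verification of hypothesis ii).

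With both hypotheses of Theorem \ref{theorem:chaganty} in hand, the theorem's conclusions apply verbatim: the joint sequence $(Y^n,Z^n)_{n\in\N}$ satisfies a WLDP on $E_1\times E_2$ at speed $\gamma(n)$ with rate function $I(y,z)=I_Y(y)+J(z\mid y)$, and the marginal sequence $(Z^n)_{n\in\N}$ satisfies a LDP at speed $\gamma(n)$ with rate function $I_Z(z)=\inf_{y}I(y,z)=\inf_{y\in\cl C_{\alpha}([0,1])}\{I_Y(y)+J(z\mid y)\}$, which is the desired statement. I expect the main obstacle to be entirely bundled into part b) of the continuity condition, which is why the exponential equivalence of Proposition \ref{proposition:expeq} is the key technical device; the only delicate quantitative estimate there is controlling $\|X^n\|_\infty\,\|y_1^n-y_1\|_\infty$, and this is handled precisely by the exponential tightness bound (\ref{definition:exponential-tightness}) together with $\|y_1^n-y_1\|_\infty\to0$.
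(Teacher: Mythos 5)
Your proposal is correct and follows essentially the same route as the paper: the authors' proof is a one-line appeal to Propositions \ref{proposition:Xy1+ y2}, \ref{proposition:expeq} and \ref{proposition:lowsemicon} to verify the hypotheses of Theorem \ref{theorem:chaganty}, and your write-up simply makes explicit how each proposition supplies parts a), b) and c) of the LDP continuity condition. No discrepancies.
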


\begin{proof}
Thanks to Propositions \ref{proposition:Xy1+ y2}, \ref
{proposition:expeq} and \ref{proposition:lowsemicon} the family of
processes $(Y^n,Z^n)_{n \in\N}$ satisfies the
hypotheses of Theorem \ref{theorem:chaganty}, therefore the theorem
holds.
\end{proof}

\section{Ornstein--Uhlenbeck processes with random diffusion coefficient} \label{sect:OU}
Let $\alpha> 0$ and let again $\cl C_{\alpha}([0,1]) =\{y\in\cl
{C}([0,1]) : y(t)\geq\alpha, \,\, t\in[0,1]\}$. Consider the family
of processes $(Y^n,Z^n)_{n \in\N}$, where $(Y^n)_{n \in\N}$ is a
family of processes with paths in $\cl C_{\alpha}([0,1])$ and for $n \in
\N$, $Z^n$ is the solution of the following stochastic differential equation.
\begin{equation}
\label{eq:OU}
\begin{cases}
dZ_{t}^{n} = \bigl( a_{0} + a_{1}Z_{t}^{n}
\bigr)\ dt + \frac{1}{\sqrt{n}}Y_{t}^{n}\ dW_{t}
\quad0< t\leq1,\\
Z_{0}^{n} = x,
\end{cases}
\end{equation}
where, $x,\, a_{0},\, a_{1} \in\mathbb{R} $ and $(Y^n)_{n \in\N}$
is a family of random processes independent of the Brownian motion
$(W_t)_{t\in[0,1]}$.

Suppose that $(Y^n)_{n \in\N}$ satisfies a LDP with the good rate
function $I_{Y}$, and the speed $\gamma(n)=n$. We want to prove a LDP
principle for $(Z^n)_{n \in\N}$.

%{\color{red}\begin{example}[Fractional stochastic volatility model]
%\label{ex:frac}\rm In some fractional models the log of the asset
%price is
%$$dS_t= S_t \sigma(\hat B_t) d(\sqrt{1-\rho^2} W^1_t + \rho W^2t),
%\quad S_0=s_0>0, \quad0\leq t\leq T,$$
%where $s_0$ is the initial price, while $T>0$ is the time horizon. The
%process $W^1$ and $W^2$ are independent standard Brownian motions, and
%$\rho\in(-1,1)$ is the correlation coefficient.
%$\hat B$ is a volterra self similar Gaussian process. In this model
%the process $(Z_t)_{t\geq0}$, $Z_t=\log S_t$, is a Ornstein-Uhlenbeck
%with random coefficients.
%\end{example}}
Let $Z^{n,y}$, $y\in\cl C_{\alpha}([0,1])$, be the solution of the
following stochastic differential equation,
\begin{equation}
\label{eq:OUy} %
\begin{cases}
dZ_{t}^{n,y} =  ( a_{0} + a_{1}Z_{t}^{n,y} )\ dt + \frac{1}{\sqrt{n}}y(t)\  dW_{t}, \quad 0< t\leq1,\\
Z_0^{n,y} =x,
\end{cases} %
\end{equation}
that is
\begin{align}
 Z_{t}^{n,y}
&= e^{a_{1}t}  \Biggl( x + \frac{a_0}{a_1}\bigl[ 1 - e^{-a_1
t}\bigr] + \frac{1} {\sqrt{n}} \int_{0}^{t} e^{-a_{1}s}y(s) dW_{s} \Biggr)\nonumber\\*
&= m(t)+ e^{a_{1}t}\frac{1} {\sqrt{n}} \int_{0}^{t} e^{-a_{1}s}y(s) dW_{s},
\label{eq:explicit-OU} %
\end{align}
$m(t)=e^{a_{1}t}(x + \frac{a_0}{a_1}[ 1 - e^{-a_1 t}])$.
$((Z_{t}^{n,y})_{t\in[0,1]})_{n\in\N}$ is a family of Gaussian
processes and a family of diffusions.
It is well known from the Wentzell--Friedlin theory that
$((Z_{t}^{n,y})_{t\in[0,1]})_{n\in\N}$
satisfies the LDP in $\cl{C}([0,1])$ with the speed $\gamma(n)=n$ and
the good rate function\vadjust{\goodbreak}
\begin{equation}
\label{eq:rate function_F-W_xi_sigma} J(f|y)=
\begin{cases}
\frac{1}{2}\int_{0}^{1} ( \frac{\dot
{f}(t)-(a_{0}+a_{1}f(t))}{y(t)})^{2}dt, & f\in H_{1}^{x},\\
+ \infty, & f\notin H_{1}^{x},
\end{cases}
\end{equation}
where
\[
H_{1}^{x}:= \Biggl\lbrace f : f(t) = x + \int
_{0}^{t} \phi(s) \ ds,\; \phi\in L^{2}
\bigl([0,1]\bigr) \Biggr\rbrace.
\]
And it is well known from the theory of Gaussian processes that the
family\break $((Z_{t}^{n,y})_{t\in[0,1]})_{n\in\N}$
satisfies the LDP in $\cl{C}([0,1])$ with the speed $\gamma(n)=n$ and
the good rate function
\begin{equation}
\label{eq:rate function_GP} %
J(f|y) =
\begin{cases}
\frac{1}{2}\lVert f-{m} \rVert^{2}_{\cl{H}_y},
& f-{m}\in\cl {H}_y,\\
+ \infty, & f-{m}\notin\cl{H}_y,
\end{cases}
\end{equation}
where $\cl H_y$ is the reproducing kernel Hilbert space associate to the
covariance function
\[
k^y(s,t)= e^{a_{1}(s+t)} \int_{0}^{s\wedge t}
e^{-2a_{1}u}y^{2}(u)\, du.
\]

The two rate functions (for the unicity of the rate function) are the
same rate function. So we can deduce a LDP for the family $(Z^n)_{n\in
\N}$ in two different ways.
First let $(Z_{t}^{n,y})_{t\in[0,1]}$ be a family of diffusions.
\begin{remark} \label{remark:expeqOU} \rm
For $y^n\in\cl C_{\alpha}([0,1])$ let $Z^{n,y^n}$ denote the solution
of equation (\ref{eq:OUy}) with $y$ replaced by $y^n$. Then if $y^n\to
y$ in $\cl C_{\alpha}([0,1])$,
from the generalized Wentzell--Friedlin theory
(Theorem 1 in \cite {Chi-Fis}), we have that
$((Z_{t}^{n,y^n})_{t\in[0,1]})_{n\in\N}$
satisfies the same LDP in $\cl{C}([0,1])$ as $((Z_{t}^{n,y})_{t\in
[0,1]})_{n\in\N}$.
\end{remark}

We now want to prove the lower semicontinuity of $J(\cdot|\cdot)$ on
$\cl{C}([0,1])\times\cl C_{\alpha}([0,1])$.

\begin{proposition} \label{proposition:lowsemiconOU} If
$(f^n,y^n)\rightarrow(f ,y )$ in $\cl{C}([0,1])\times\cl C_{\alpha}
([0,1])$, then
\[
\liminf_{n\to+\infty}J\bigl(f^n|y^n\bigr) \geq
J(f |y ).
\]
\end{proposition}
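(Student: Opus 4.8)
The plan is to work with the Freidlin--Wentzell representation of $J(\cdot|y)$, namely the explicit integral functional
\[
J(f|y)=\frac{1}{2}\int_{0}^{1}\biggl(\frac{\dot f(t)-(a_0+a_1 f(t))}{y(t)}\biggr)^{2}dt,\qquad f\in H_1^x,
\]
(and $+\infty$ otherwise), rather than with the Gaussian RKHS form, since it is better suited to a direct weak-compactness argument. If $\liminf_{n}J(f^n|y^n)=+\infty$ there is nothing to prove, so I may assume this liminf equals some finite $L$ and, passing to a subsequence (not relabelled), that $J(f^n|y^n)\to L$ and $f^n\in H_1^x$ for all large $n$.

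Next I introduce $g^n:=\dfrac{\dot f^n-(a_0+a_1 f^n)}{y^n}\in L^2([0,1])$, so that $J(f^n|y^n)=\tfrac12\|g^n\|_{L^2}^2$. Since this quantity is bounded, $(g^n)$ is bounded in the Hilbert space $L^2([0,1])$, and after a further subsequence $g^n\rightharpoonup g$ weakly for some $g\in L^2$; along this subsequence $J(f^n|y^n)\to L$ still holds. The goal is then to identify the limit, i.e. to show $f\in H_1^x$ and $g=\dfrac{\dot f-(a_0+a_1 f)}{y}$ almost everywhere, and to conclude by weak lower semicontinuity of the norm.

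For the identification I use crucially that $y^n\to y$ uniformly with $y^n,y\ge\alpha>0$, so that dividing by $y^n$ is harmless. Writing $\dot f^n=y^n g^n+a_0+a_1 f^n$, I first check $y^n g^n\rightharpoonup yg$: for $\psi\in L^2$ the difference splits as $\int(y^n-y)g^n\psi+\int y(g^n-g)\psi$, the first term being bounded by $\|y^n-y\|_\infty\,\|g^n\|_{L^2}\,\|\psi\|_{L^2}\to0$ and the second tending to $0$ because $y\psi\in L^2$ and $g^n\rightharpoonup g$. Since also $a_1 f^n\to a_1 f$ uniformly, hence strongly in $L^2$, this gives $\dot f^n\rightharpoonup\phi:=yg+a_0+a_1 f$ weakly in $L^2$. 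Testing against the indicators $\mathbf 1_{[0,t]}\in L^2$ yields $\int_0^t\dot f^n(s)\,ds\to\int_0^t\phi(s)\,ds$, while $f^n(t)\to f(t)$ pointwise; therefore $f(t)=x+\int_0^t\phi(s)\,ds$, so $f\in H_1^x$ with $\dot f=\phi$ and consequently $g=\dfrac{\dot f-(a_0+a_1 f)}{y}$ a.e.

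Finally, by weak lower semicontinuity of the $L^2$-norm, $\|g\|_{L^2}^2\le\liminf_n\|g^n\|_{L^2}^2$, whence
\[
J(f|y)=\tfrac12\|g\|_{L^2}^2\le\liminf_n\tfrac12\|g^n\|_{L^2}^2=\liminf_n J(f^n|y^n)=L,
\]
which is the desired inequality. I expect the main obstacle to be the identification step: correctly passing to the limit in the product $y^n g^n$, where uniform convergence of $(y^n)$ must be combined with mere weak convergence of $(g^n)$, and then recovering $f\in H_1^x$ together with the correct formula for $\dot f$ from the weak convergence of the derivatives; the uniform lower bound $y^n\ge\alpha$ is what keeps all these manipulations under control.
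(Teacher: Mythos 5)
Your proof is correct, but it follows a genuinely different route from the paper's. Both arguments start from the same Freidlin--Wentzell integral representation of $J(\cdot|y)$, but the paper handles the perturbation in $y$ by a multiplicative trick: it writes the integrand as $\bigl|\frac{\dot f^n-(a_0+a_1f^n)}{y}\bigr|^2\cdot\bigl|\frac{y}{y^n}\bigr|^2$, uses the uniform convergence $y^n\to y$ together with the lower bound $y\ge\alpha$ to get $\inf_t|y(t)/y^n(t)|^2\ge 1-\ep$ eventually, and thereby reduces the claim to the lower semicontinuity of the \emph{fixed-coefficient} functional $J(\cdot|y)$ in its first argument (quoted as a known property of the good rate function), finally letting $\ep\downarrow0$. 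You instead run the full weak-compactness argument: boundedness of $g^n=(\dot f^n-(a_0+a_1f^n))/y^n$ in $L^2$, extraction of a weak limit, identification of the limit via the splitting $\int(y^n-y)g^n\psi+\int y(g^n-g)\psi$ (uniform convergence handling the first term, weak convergence the second), recovery of $f\in H_1^x$ by testing against indicators, and weak lower semicontinuity of the $L^2$-norm. What your version buys is self-containedness: you do not need to invoke the lower semicontinuity of $J(\cdot|y)$ as an external fact, since your identification step proves it en route while simultaneously absorbing the varying $y^n$; the cost is length. What the paper's version buys is brevity, at the price of leaning on a standard but unproved-there property of the Freidlin--Wentzell rate function. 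Your emphasis on where the hypotheses $y^n\to y$ uniformly and $y^n\ge\alpha$ enter (making $y^ng^n\rightharpoonup yg$ work and keeping $g^n$ well defined and bounded) is exactly the right place to locate the difficulty.
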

\begin{proof}
If $y^{n}\overset{\cl C_{\alpha}([0,1])}{\longrightarrow} y $,
then for any $\ep>0$, eventually $\inf_{t \in[0,1]} \lvert\frac
{y (t)}{y^n(t)} \rvert^2\geq(1-\ep)$, and by the lower
semicontinuity of $J(\cdot|y )$,
\begin{align*}
&\liminf_{(y^n, f^n) \rightarrow(y ,f )} J\bigl(f^n|y^n\bigr)\\
&\quad =
\liminf_{(y^n,
f^n) \rightarrow(y ,f )} \frac{1}{2} \int_{0}^{1}
\biggl\lvert \frac
{\dot{f}^{n}(t)-(a_{0}+a_{1}f^n(t))}{y^{n}(t)} \biggr\rvert^{2}dt
\\
&\quad=\liminf_{(y^n, f^n) \rightarrow(y ,f )} \frac{1}{2}\int_{0}^{1}
\biggl\lvert \frac{\dot{f}_{n}(t)-(a_{0}+a_{1}f^n(t))}{y
(t)} \biggr\rvert^{2}\cdot \biggl\lvert
\frac{y (t)}{y^n(t)} \biggr\rvert^2 dt
\\
&\quad\geq\liminf_{(y^n, f^n) \rightarrow(y ,f )} \frac{1}{2}\int_{0}^{1}
\biggl\lvert \frac{\dot{f}_{n}(t)-(a_{0}+a_{1}f^n(t))}{y
(t)} \biggr\rvert^{2} dt \cdot\inf
_{t \in[0,1]} \biggl\lvert\frac{y
(t)}{y^n(t)} \biggr\rvert^2
\\
&\quad=(1-\ep)\liminf_{ f^n \rightarrow f } J\bigl(f^n|y \bigr),
\end{align*}
and the proposition holds.
\end{proof}

\begin{theorem}
Consider the family of processes $(Y^n,Z^n)_{n \in\N}$, where
$(Y^n)_{n \in\N}$ is a family of processes with paths in $\cl
C_{\alpha}([0,1])$ and for $n\in\N$, $Z^n$ is the solution of (\ref
{eq:OU}). Suppose that $(Y^n)_{n \in\N}$ is independent from the
Brownian motion and satisfies a LDP with the good rate function $I_{Y}$
and the speed $\gamma(n)=n$. Then $(Y^n,Z^n)_{n \in\N}$ satisfies
the WLDP with the speed $\gamma(n)=n$ and rate function
\[
I(y,z)= I_Y(y) + J(z|y),
\]
and $(Z^n)_{n \in\N}$ satisfies the LDP with the speed $\gamma(n)$
and the rate function
\[
I_Z(z)= \inf_{y\in\cl C_{\alpha}([0,1]) }\bigl\{I_Y(y) +
J(z|y)\bigr\}.
\]
\end{theorem}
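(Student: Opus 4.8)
The plan is to present the statement as a direct application of Chaganty's Theorem (Theorem~\ref{theorem:chaganty}), mirroring the corresponding theorem of Section~\ref{sect:main}. First I would fix the framework: take $E_1 = \cl C_\alpha([0,1])$ and $E_2 = \cl C([0,1])$, let $\mu_{1n}$ be the law of $Y^n$, and let $\mu_{2n}(\cdot\,|\,y)$ be the regular conditional law of $Z^n$ given $\{Y^n = y\}$ supplied by Proposition~\ref{prop:condprob}. The crucial structural observation is that, since $(Y^n)_n$ is independent of the Brownian motion, this conditional law coincides with the law of the solution $Z^{n,y}$ of the deterministic-coefficient equation~(\ref{eq:OUy}); this is exactly what makes $Z^n$ conditionally Gaussian and lets one work with the explicit, well-understood diffusion $Z^{n,y}$.

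It then remains to verify the two hypotheses of Theorem~\ref{theorem:chaganty}. Hypothesis~(i), an LDP for $(Y^n)_n$ with good rate function $I_Y$ at speed $\gamma(n)=n$, is precisely the standing assumption. The substance lies in hypothesis~(ii), the LDP continuity condition of Definition~\ref{def: LDP continuity condition}, whose three parts I would check in turn. Part~(a), that $J(\cdot\,|\,y)$ is a good rate function for each fixed $y$, follows from the Wentzell--Friedlin LDP for $(Z^{n,y})_n$ recorded in~(\ref{eq:rate function_F-W_xi_sigma}) (equivalently from its Gaussian-process form~(\ref{eq:rate function_GP})). Part~(c), the joint lower semicontinuity of $(y,z)\mapsto J(z|y)$, is established by Proposition~\ref{proposition:lowsemiconOU}, whose argument exploits the uniform lower bound $y\geq\alpha$ to control the ratio $|y(t)/y^n(t)|^2$.

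The hard part is part~(b): for every sequence $y^n\to y$ in $\cl C_\alpha([0,1])$ the perturbed family $(Z^{n,y^n})_n$ must obey the same LDP, with the same rate function $J(\cdot\,|\,y)$, as the unperturbed family. This cannot be extracted from the fixed-coefficient LDP alone, because the diffusion coefficient itself moves with $n$; what is needed is the robustness of the Wentzell--Friedlin large-deviation estimates under perturbation of the coefficient, again using that $\alpha$ keeps the coefficients uniformly away from zero. I would invoke the generalized Wentzell--Friedlin theorem (Theorem~1 in \cite{Chi-Fis}) in the form packaged in Remark~\ref{remark:expeqOU}, which asserts exactly this stability. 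With~(i) and~(ii) in hand, Theorem~\ref{theorem:chaganty} delivers at once the WLDP for $(Y^n,Z^n)_n$ at speed $n$ with rate function $I(y,z)=I_Y(y)+J(z|y)$, and the full LDP for the marginal family $(Z^n)_n$ with the contracted rate function $I_Z(z)=\inf_{y\in\cl C_\alpha([0,1])}\{I_Y(y)+J(z|y)\}$, which is the claim.
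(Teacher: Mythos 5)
Your proposal is correct and follows essentially the same route as the paper: the paper's proof likewise invokes Chaganty's Theorem~\ref{theorem:chaganty}, using Remark~\ref{remark:expeqOU} (the generalized Wentzell--Friedlin stability from \cite{Chi-Fis}) for part~(b) of the LDP continuity condition and Proposition~\ref{proposition:lowsemiconOU} for the lower semicontinuity. You simply spell out the verification of hypotheses~(i) and~(ii) in more detail than the paper's one-line proof does.
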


\begin{proof}
The family of processes $(Y^n,Z^n)_{n \in\N}$, thanks to
Remark \ref{remark:expeqOU} and Proposition \ref
{proposition:lowsemiconOU}, satisfies
the hypotheses of Theorem \ref{theorem:chaganty}, therefore the theorem
holds.
\end{proof}

Now let $(Z_{t}^{n,y})_{t\in[0,1]}$ be a family of continuous Gaussian
processes. We have to prove that $(Z_{t}^{n,y^n})_{t\in[0,1]}$
satisfies the same LDP as $(Z_{t}^{n,y})_{t\in[0,1]}$ when
$y^{n}\overset{\cl C_{\alpha}([0,1])}{\longrightarrow} y $.
Let $\tilde{Z}^{n,y^n}_t=Z^{n,y^n}_t-m(t)$
for every $n \in\N$ and $t \in[0,1]$.

Straightforward calculations show that there exists $L>0$, such that
\begin{align*}
&\sup_{s,t\in [ 0,1 ], s \neq t } n\cdot\frac{\lvert
k^{y^n} ( t,t ) + k^{y^n} ( s,s ) -2k^{y^n}
( t,s ) \rvert}{\lvert t-s \rvert^{2\alpha}}
\\
&\quad\leq L\underset{s,t\in [ 0,1 ], s \neq t } {\sup}
\frac{ ( e^{a_{1}(t-s)} - 1 )}{\lvert
t-s \rvert^{2\alpha}} < +\infty, \quad\mathrm{for}\ 2\alpha= 1.
\end{align*}
Therefore the family $ (\tilde{Z}^{n,y^n})_{n \in\N} $ is
exponentially tight at the speed $n$.
Furthermore, conditions (\ref{eq:meanlimit}) and (\ref{eq:covlimit})
of Theorem \ref{theorem:ldp-gaussian} are fullfilled, in fact
\[
\lim_{n \rightarrow+\infty} \mathbb{E} \bigl[ \bigl\langle\lambda ,
\tilde{Z}^{n,y^n}\bigr\rangle \bigr] = 0
\]
and
\[
\lim_{n\rightarrow+\infty} \Var \bigl( \bigl\langle\lambda, \tilde
{Z}^{n,y^n}\bigr\rangle \bigr)\cdot n = \int_{0}^{1}
\int_{0}^{1} k^y(s,t) \ d\lambda(t)\ d
\lambda(s),
\]
where
$k^y(s,t)= e^{a_{1}(s+t)} \int_{0}^{s\wedge t} e^{-2a_{1}u}y^2(u)\  du$.
Therefore $ (\tilde{Z}^{n,y^n})_{n \in\N} $ satisfies a LDP on $\cl
{C}([0,1])$.
Finally, thanks to the contraction principle, the family $
({Z}^{n,y^n})_{n \in\N} $ satisfies a LDP on $\cl{C}([0,1])$ with
the rate function $J(\cdot| y)$ defined in (\ref{eq:rate function_GP}).

\begin{remark} \rm The lower semicontinuity of $J(\cdot|\cdot)$ on
$\cl{C}([0,1])\times\cl C_{\alpha}([0,1])$ follows from Proposition
\ref{proposition:lowsemiconOU}.
\end{remark}

We have proved that the hypotheses of Theorem \ref{theorem:chaganty}
are verified, so the LDP for $({Z}^n)_{n \in\N} $ follows.

\section{Estimates of level crossing probability}\label{sect:ruin}
In this section we will study the probability of level crossing for a
family of conditionally Gaussian processes. In particular, we will study
the probability
\begin{equation}
p_n=\P\Bigl(\sup_{0\leq t \leq1}\bigl(Z^n_t-
\varphi(t)\bigr)>1\Bigr),
\end{equation}
as $ n \rightarrow\infty$, where $(Z^n)_{n \in\N}$ is a family of
conditionally Gaussian process. In this situation the probability $p_n$
has a large deviation limit
\[
\lim_{n \rightarrow\infty} \frac{1}{\gamma(n)}\log (p_n)=-I_{\varphi}.
\]
The main reference in this section is \cite{Bal-Pac}.
We now compute $\lim_{n \rightarrow\infty} \frac{1}{\gamma(n)}\log
(p_n)$, for a fixed continuous path $\varphi\in\cl{C}([0,1])$. The
computation is simple, in fact, since $(Z^n)_{n \in\N}$ satisfies a
LDP with
the rate function
\begin{equation}
I_Z(z)=\inf_{y \in C} \bigl\lbrace
I_Y(y)+J(z|y) \bigr\rbrace,
\end{equation}
where $I_Y(\cdot)$ is the rate function associated to the family of
conditioning processes $(Y^n)_{n \in\N}$, $C$ is the Polish set where
$(Y^n)_{n \in\N}$ takes values, and $J(\cdot|y)$ is the good rate
function of the family of Gaussian processes $(Z^{n,y})_{n \in\N}$.
If we denote
\[
\cl A=\Bigl\{ w \in\cl{C}\bigl([0,1]\bigr): \sup_{0\leq t\leq1}
\bigl(w(t)-\varphi (t)\bigr)>1\Bigr\},
\]
we have that
\begin{equation*}
\begin{aligned} -\inf_{w \in\mathring{\cl{A}}}I_Z(w)\leq
\liminf_{n \rightarrow
+\infty}\frac{1}{\gamma(n)}\log(p_n)\leq\limsup
_{n \rightarrow
+\infty}\frac{1}{\gamma(n)}\log(p_n)\leq-\inf
_{w \in\bar{\cl{A}}}I_Z(w) \end{aligned} %
\end{equation*}
where
\[
\bar{\cl{A}}= \Bigl\lbrace w \in\cl{C}\bigl([0,1]\bigr): \sup
_{0\leq t\leq
1}\bigl(w(t)-\varphi(t)\bigr) \geq1 \Bigr\rbrace\,
\]
and
\[
\mathring{\cl{A}}=\cl A= \Bigl\lbrace w \in\cl{C}\bigl([0,1]\bigr): \sup
_{0\leq t\leq1}\bigl(w(t)-\varphi(t)\bigr) > 1 \Bigr\rbrace.
\]
It is a simple calculation to show that
$\inf_{w \in\mathring{\cl{A}}}I_Z(w)=\inf_{w \in\bar{\cl{A}}}I_Z(w)$.
Therefore,
\begin{equation}
\label{eq: 1 Estimates } \lim_{n \rightarrow\infty}\frac{1}{\gamma(n)}\log(p_n)=-
\inf_{w
\in\cl{ A}}I_Z(w).
\end{equation}
For every $t \in[0,1]$ let
$\cl{A}_t= \lbrace w \in\cl{C}([0,1]): w(t)= 1+\varphi
(t) \rbrace$,
then
$\cl{A}=\bigcup_{0\leq t\leq1}\cl{A}_t$
and so
\[
\inf_{w \in\cl{A}}I_Z(w)=\inf_{y \in C}
\inf_{0\leq t\leq1}\inf_{w \in\cl{A}_t} \bigl\lbrace
I_Y(y)+J(w|y) \bigr\rbrace.
\]
\subsection{Gaussian process with random mean and variance}
For every $n \in\N$, let $Z^n=X^nY^n_1+ Y_2^n$ as in Section \ref
{sect:main}. In this case we know that
\begin{equation*}
J(z|y) =
\begin{cases}
\frac{1}{2} \lVert
\frac{z-y_2}{y_1} \rVert^{2}_{\bar{\cl
{H}}}, & \frac{z-y_2}{y_1}
\in\bar{\cl{H}},
\\
+\infty& \text{otherwise}.
\end{cases}
\end{equation*}
Therefore, we have
\begin{equation*}
\begin{aligned} \inf_{w \in\cl{A}}I_Z(w)&=
\inf_{y \in\cl C_{\alpha}([0,1])\times
\cl C([0,1])}\inf_{0\leq t\leq1}\inf_{w \in\cl{A}_t}
\bigl\lbrace I_Y(y)+J(w|y) \bigr\rbrace
\\
&=\inf_{y \in\cl C_{\alpha}([0,1])\times\cl C([0,1])}\inf_{0\leq
t\leq1}\inf
_{w \in\cl{A}_t} \biggl\lbrace I_Y(y)+\frac{1}{2}
\biggl\lVert\frac{w-y_2}{y_1} \biggr\rVert^{2}_{\bar{\cl{H}}} \biggr
\rbrace. \end{aligned} %
\end{equation*}
The set of paths of the form\querymark{Q2}
\[
h(u) =\int_{0}^{1} \bar{k}(u,v)\ d\lambda(v),
\quad u \in [0,1],\quad\lambda\in\cl{M}[0,1],
\]
is dense in $\bar{\cl{H}} $ and, therefore, the infimum $ \inf_{w \in\cl{A}_t} \lbrace I_Y(y)+\frac
{1}{2}\lVert\frac{w}{y}\rVert^{2}_{\bar{\cl{H}}} \rbrace$
is the same as that over the functions $w$ such that
\[
w(u)-y_2(u) =y_1(u)\cdot\int_{0}^{1}
\bar{k}(u,v)\ d\lambda(v), \quad u \in[0,1],
\]
for some $ \lambda\in\cl{M}[0,1]$. For such kind of paths,
recalling the expression of their norms in the RKHS, the functional we
aim to minimize is given by
\[
I_Y(y)+\frac{1}{2} \biggl\lVert\frac{w-y_2}{y_1} \biggr
\rVert^{2}_{\bar
{\cl{H}}}=I_Y(y)+\frac{1}{2}\int
_{0}^{1} \int_{0}^{1}
\bar{k}(u,v)\ d\lambda(u)\ d\lambda(v)
\]
therefore, it is enough to minimize the right-hand side of the above
equation with respect to the measure $\lambda$, with the additional
constraint that
\[
w(t)= 1+\varphi(t),
\]
which we can write in the equivalent form
\[
\int_{0}^{1} \bar{k}(t,v)\ d\lambda(v)-
\frac{1+\varphi
(t)-y_2(t)}{y_1(t)} =0.
\]
This is a constrained extremum problem, and thus we are led to use the
method of Lagrange multipliers. The measure $\lambda$ must be such that
\[
\int_{0}^{1} \int_{0}^{1}
\bar{k}(u,v)\ d\lambda(u)\ d\mu(v)=\beta \int_{0}^{1}
\bar{k}(t,v)\ d\mu(v),\quad \mu\in\cl{M}[0,1] ,
\]
for some $\beta\in\R$. We find
\[
\beta=\frac{\int_{0}^{1}\bar{k}(t,u)\ d\lambda(u)}{\bar
{k}(t,t)}=\frac{1+\varphi(t)-y_2(t)}{y_1(t)\bar{k}(t,t)}
\]
and
\[
\bar{\lambda}=\frac{1+\varphi(t)-y_2(t)}{y_1(t)\bar{k}(t,t)}\delta _{ \lbrace t \rbrace}.
\]
Such measure satisfies the Lagrange multipliers problem, and it is
therefore a critical point for the functional we want to minimize.
Since this is a strictly convex functional restricted on a linear
subspace of $ \cl{M}[0, 1]$, it is still strictly convex, and thus
the critical point $ \bar{\lambda} $ is actually its unique point of
minimum. Hence, we have
\begin{equation*}
\inf_{w \in\cl{A}}I_Z(w)=\inf_{y \in\cl C_{\alpha}([0,1])\times\cl
C([0,1])}
\inf_{0\leq t\leq1} \biggl\lbrace I_Y(y)+ \frac{
(1+\varphi(t)-y_2(t) )^2}{2y_1^2(t)\bar{k}(t,t)}
\biggr\rbrace.
\end{equation*}

\subsection{Ornstein--Uhlenbeck processes with random diffusion coefficient}
In this case
\begin{align*}
J(f|y) &=
\begin{cases}
\frac{1}{2}\lVert f-m\rVert^{2}_{\cl{H}_y}, & f-m\in
\cl{H}_y,\\
+\infty, & f-m\notin\cl{H}_y,
\end{cases}
\\
&=
\begin{cases}
\frac{1}{2}\int_{0}^{1}
\lvert \frac{\dot
{f}(t)-(a_{0}+a_{1}f(t))}{y(t)} \rvert^{2}dt, & f\in
H_{1}^{x},\\
+\infty, & f\notin H_{1}^{x},
\end{cases}
\end{align*}
where $m(t)=e^{a_{1}t}(x + \frac{a_0}{a_1}[ 1 - e^{-a_1 t}])$, $t \in
[0,1]$, $\cl{H}_y$ is the RKHS associated to
\[
k^y(s,t)= e^{a_{1}(s+t)} \int_{0}^{s\wedge t}
e^{-2a_{1}u}y^{2}(u)\ du,
\]
and $H_1^x =m+\cl{H}_y$.

We have
\begin{equation*}
\begin{aligned} \inf_{w \in\cl{A}}I_Z(w)&=
\inf_{y \in\cl C_{\alpha}[0,1]}\inf_{0\leq t\leq1}\inf_{w \in\cl{A}_t}
\bigl\lbrace I_Y(y)+J(w|y) \bigr\rbrace
\\
&=\inf_{y \in\cl C_{\alpha}[0,1]}\inf_{0\leq t\leq1}\inf
_{w \in\cl
{A}_t} \biggl\lbrace I_Y(y)+\frac{1}{2}
\lVert w-m\rVert^{2}_{\cl
{H}_y} \biggr\rbrace. \end{aligned}
\end{equation*}

The set of paths of the form\querymark{Q3}
\[
h(u) =\int_{0}^{1} k^y(u,v)\ d
\lambda(v), \quad u \in[0,1],\quad \lambda\in\cl{M}[0,1],
\]
is dense in $\cl{H}_y $, therefore, the infimum
\[
\inf_{w \in\cl{A}_t} \biggl\lbrace I_Y(y)+
\frac{1}{2}\lVert w-{m}\rVert^{2}_{\cl{H}_y} \biggr\rbrace,
\]
is the same as that over the functions of the form
\[
w(u) =m(u)+\int_{0}^{1} k^y(u,v)\ d
\lambda(v), \quad u \in[0,1],
\]
for some $ \lambda\in\cl{M}[0,1]$. For paths of such kind,
recalling the expression of their norms in the RKHS, the functional we
aim to minimize is given by
\[
I_Y(y)+\frac{1}{2}\lVert w-{m}\rVert^{2}_{\cl{H}_y}=I_Y(y)+
\frac
{1}{2}\int_{0}^{1} \int
_{0}^{1}k^y(u,v)\ d\lambda(u)\ d
\lambda(v)
\]
therefore, it is enough to minimize the right-hand side of the above
equation with respect to the measure $\lambda$, with the additional
constraint
\[
w(t)= 1+\varphi(t),
\]
which can be written in the equivalent form
\[
\int_{0}^{1} k^y(t,v)\ d
\lambda(v)+m(t)-\bigl(1+\varphi(t)\bigr) =0.
\]
This is a constrained extremum problem, and thus we are led to use the
method of Lagrange multipliers. We find
\[
\beta=\frac{\int_{0}^{1} k^y(t,u)\ d\lambda(u)}{ k^y(t,t)}=\frac
{1+\varphi(t)-m(t)}{ k^y(t,t)}
\]
and
\[
\bar{\lambda}=\frac{1+\varphi(t)-{m}(t)}{ k^y(t,t)}\delta_{
\lbrace t \rbrace},
\]
$ \delta_{ \lbrace t \rbrace} $ standing for the Dirac
mass in $ t $. Such measure satisfies the Lagrange multipliers problem,
and it is therefore a critical point for the functional we want to
minimize. Since this functional is a strictly convex restricted on a
linear subspace of $ \cl{M}[0, 1]$, it is still strictly convex, and
thus the critical point $ \bar{\lambda} $ is actually its unique
point of minimum. Hence, we have
\begin{equation*}
\inf_{w \in\cl{A}_t} I_Y(y)+\frac{1}{2}\lVert w-m
\rVert^{2}_{\cl{H}_y} =I_Y(y)+ \frac{ (1+\varphi(t)-m(t) )^2}{2 k^y(t,t)},
\end{equation*}
and therefore
\begin{equation*}
\inf_{w \in\cl{A}}I_Z(w)=\inf_{y \in\cl C_{\alpha}([0,1])}
\inf_{0\leq t\leq1} \biggl\lbrace I_Y(y)+ \frac{ (1+\varphi
(t)-m(t) )^2}{2 k^y(t,t)}
\biggr\rbrace.
\end{equation*}

%\begin{appendix}
%\end{appendix}

%\begin{acknowledgement}%[title={Acknowledgments}]
%\end{acknowledgement}

%\begin{funding}
%\gsponsor[id=,sponsor-id=]{}
%\gnumber[refid=]{}
%\end{funding}

%\begin{appendix}
%\end{appendix}

%\begin{acknowledgement}%[title={Acknowledgments}]
%\end{acknowledgement}

%\begin{funding}
%\gsponsor[id=,sponsor-id=]{}
%\gnumber[refid=]{}
%\end{funding}

%\begin{thebibliography}{}
%\end{thebibliography}

\end{document}